\documentclass[11pt]{article}
\usepackage{amsmath,amssymb,amsthm}
\usepackage[margin=2.5cm]{geometry}
\usepackage{enumitem}
\usepackage{hyperref}
\usepackage{booktabs}
\usepackage{tikz}
\usepackage{xcolor}
\newcommand{\rev}[1]{#1}
\newtheorem{theorem}{Theorem}[section]
\numberwithin{equation}{section}
\newtheorem{lemma}[theorem]{Lemma}
\newtheorem{proposition}[theorem]{Proposition}
\newtheorem{corollary}[theorem]{Corollary}

\theoremstyle{remark}
\newtheorem{remark}[theorem]{Remark}
\newtheorem{example}[theorem]{Example}

\title{The domination number of the $2$-token graph\\ of the complete bipartite graph}

\author{E. Acosta Troncoso, I. Caldera Caldera, and L. M. Rivera}
\date{}

\begin{document}
\maketitle

\begin{abstract}
\noindent
For a graph $G$, the $2$-token graph $F_2(G)$ has as vertices all $2$-subsets of $V(G)$, with two such subsets adjacent whenever their symmetric difference is an edge of $G$. In this paper, we determine the domination number of the $2$-token graph of the complete bipartite graph $K_{m,n}$, for all integers $2\le m\le n$. We show that it equals $\lceil m/2\rceil+n-1$, except when $m=3$, or when $m\ge5$ is odd and $n=m+1$; in these two exceptional situations it is one less. Our arguments rely on an explicit characterization of the dominating sets of the $2$-token graph of $K_{m,n}$.

\medskip
\noindent\textbf{2020 Mathematics Subject Classification:} 05C69, 05C76.

\noindent\textbf{Key words and phrases:} token graphs, double vertex graphs, domination number, complete bipartite graphs.
\end{abstract}

\section{Introduction}

Let $G=(V,E)$ be a simple graph and let $1\le k\le |V|-1$ be an integer. The \emph{$k$-token graph} $F_k(G)$ is the graph whose vertex set is $\binom{V}{k}$, the collection of all $k$-subsets of $V$, where two $k$-subsets are adjacent whenever their symmetric difference is an edge of $G$. Equivalently, $F_k(G)$ models $k$ indistinguishable tokens placed on the vertices of $G$ (at most one per vertex), with adjacency given by sliding a single token along an edge to an unoccupied vertex~\cite{fabila2012}.

The graph $F_2(G)$, with the name of the \emph{double vertex graph} of $G$, was introduced by Alavi, Behzad, Erd\H{o}s and Lick~\cite{alavi1991} and has been studied extensively since then (see the survey~\cite{alavi2002}). However, it can be traced back to Johns' 1988 thesis under the name of $2$-subgraph graph~\cite{johns1988}. In 2002, Rudolph~\cite{rudolph2002} rediscovered $F_2(G)$ as the symmetric square of $G$, motivated by the graph isomorphism problem and by applications in quantum mechanics. The general $k$-token graph was introduced under its current name by Fabila-Monroy, Flores-Pe\~{n}aloza, Huemer, Hurtado, Urrutia and Wood~\cite{fabila2012}, who studied several of its combinatorial invariants (connectivity, diameter, clique and chromatic number, Hamiltonicity). Since then, token graphs have received considerable attention; see, e.g.,~\cite{carballosa2017,dalfo10, dalfo11,dealba2020, fabila6, lew}.

A set $D\subseteq V(G)$ is \emph{dominating} if every vertex of $G$ is in $D$ or has a neighbor in $D$; the \emph{domination number} $\gamma(G)$ is the minimum cardinality of such a set. Domination is one of the oldest and most extensively studied parameters in graph theory, and computing it is NP-hard in general~\cite{garey1979}.
 
The domination number of token graphs has become an active line of research only recently. Prabhu and Manjula~\cite{prabhu2019} obtained general bounds for $\gamma(F_2(G))$ in terms of structural parameters of the graph $G$. G\'omez Soto, Lea\~nos, R\'ios Castro, and Rivera~\cite{gomezsoto2018} determined the packing number of $F_2(P_n)$ for the path graph $P_n$, which, by the standard inequality $\gamma(G)\ge \rho(G)$, yields a lower bound for $\gamma(F_2(P_n))$. Recently, Acosta Troncoso, Lea\~nos, and Rivera~\cite{acosta2026} matched this lower bound asymptotically \rev{and computed} the exact values for $n\le 25$ via computer search. In a related direction, Fabila-Monroy and G\'omez-Galicia~\cite{fabila2025} determined $\gamma(F_2(G))$ exactly when $G$ is a complete graph $K_n$ or a star $S_n$ (as well as $\gamma(F_3(K_n))$).

To our knowledge, the domination number of the $2$-token graph of complete bipartite graphs $K_{m,n}$ had not been addressed before, although other parameters of the token graphs of $K_{m,n}$, such as the automorphism group~\cite{fabila6}, the independence and matching numbers, have been studied before~\cite{dealba2020}. In this paper, we determine $\gamma(F_2(K_{m,n}))$ exactly for every pair of integers $2\le m\le n$.

Our main result is the following:\footnote{A first version of this paper was submitted for publication before the preprint of Chinman, Cortijo, Pedroni, Seong and Stenger~\cite{CCPSS} appeared on arXiv on October 20, 2026; both works were done independently, and they obtain Theorem~\ref{thm:main} by different methods. The dominating sets of~\cite{CCPSS} are essentially those of Sections~\ref{sec:even}--\ref{sec:odd}, but their lower bounds are proved in a different way. Some general bounds for $\gamma(F_k(K_{m,n}))$ with $k\ge3$ are also given in~\cite{CCPSS}.}

\begin{theorem}\label{thm:main}
Let $m$ and $n$ be integers such that $2 \le m \le n$. Then
\[
\gamma(F_2(K_{m,n}))=
\begin{cases}
\left\lceil\dfrac m2\right\rceil+n-2 & \text{if } m=3, \text{ or } m\ge 5 \text{ is odd and } n=m+1,\\[8pt]
\left\lceil\dfrac m2\right\rceil+n-1 & \text{otherwise}.
\end{cases}
\]
\end{theorem}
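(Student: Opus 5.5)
The plan rests on an explicit translation of domination in $F_2(K_{m,n})$ into a covering problem on three ordinary graphs, followed by a matching construction and a counting lower bound. Write $V(K_{m,n})=A\cup B$ with $|A|=m$ and $|B|=n$, and split the vertices of $F_2(K_{m,n})$ into three types: $AA$-pairs, $BB$-pairs and $AB$-pairs. Since $\{x,y\}\sim\{x,z\}$ exactly when $y,z$ lie in different parts, the graph $F_2(K_{m,n})$ is bipartite with sides $AA\cup BB$ and $AB$. The adjacencies are as follows.
\begin{itemize}[nosep]
\item An $AA$-pair $\{a,a'\}$ is adjacent to every $AB$-pair containing $a$ or $a'$.
\item A $BB$-pair is adjacent to every $AB$-pair sharing a vertex with it.
\item An $AB$-pair $\{a,b\}$ is adjacent to every $AA$-pair containing $a$ and every $BB$-pair containing $b$.
\end{itemize}
A set $D$ then corresponds to a triple $(H_A,H_B,M)$: a graph $H_A$ on $A$, a graph $H_B$ on $B$, and a bipartite graph $M$ between $A$ and $B$, with $|D|=|E(H_A)|+|E(H_B)|+|E(M)|$. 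Let $T_A,T_B$ be the non-isolated vertices of $H_A,H_B$, and let $S_A,S_B$ be the vertices of $A,B$ covered by $M$. Then $D$ is dominating iff three conditions hold:
\begin{enumerate}[nosep]
\item[(i)] $A\setminus S_A$ is a clique of $H_A$;
\item[(ii)] $B\setminus S_B$ is a clique of $H_B$;
\item[(iii)] $(A\setminus T_A)\times(B\setminus T_B)\subseteq E(M)$.
\end{enumerate}
Proving this characterization is routine and is the first step.

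For the upper bound I would take $H_A$ to be a minimum edge cover of $A$, with $\lceil m/2\rceil$ edges, and $H_B$ empty; condition (iii) then holds trivially. For $M$ I take $n-1$ edges covering $n-1$ vertices of $B$ and all (or all but one) vertices of $A$, which is possible since $n\ge m$. This gives $\lceil m/2\rceil+n-1$. For the exceptional cases I look for a saving of one edge. For odd $m$ the edge cover of $A$ has a wasted endpoint, so I let one vertex $a_0\in A$ be uncovered by $H_A$. Condition (iii) then forces $\{a_0\}\times(B\setminus T_B)\subseteq M$, and I rebalance by putting one or two edges into $H_B$ so that $B\setminus S_B$ can be an edge of $H_B$ rather than a single vertex. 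For $m=3$ I expect a small explicit configuration to work for all $n$. For odd $m\ge5$ with $n=m+1$, the counts of $A$- and $B$-vertices covered by $M$ should fit exactly. I would find these configurations by hand for small cases, e.g.\ $K_{3,3}$ and $K_{5,6}$, and then generalize.

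The lower bound is the main obstacle. Put $t_A=|T_A|$, $t_B=|T_B|$, $u=m-t_A$, $w=n-t_B$. Then $|E(H_A)|\ge\lceil t_A/2\rceil$ and $|E(H_B)|\ge\lceil t_B/2\rceil$, and by (iii) $|E(M)|\ge uw$. Also, by (i)--(ii), $M$ must cover all of $A$ and $B$ except cliques, and a clique of size $k$ costs $\binom k2$ edges, which must fit inside $H_A$ or $H_B$. Hence $|E(M)|\ge\max(m,n)-c$, where $c$ is roughly the clique size absorbed by $H_A$ or $H_B$. I would then split into cases according to whether $u=0$, $w=0$, or both are positive.
\begin{itemize}[nosep]
\item If $u,w\ge1$, then $uw\ge u+w-1$. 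Combined with $\lceil t_A/2\rceil+\lceil t_B/2\rceil$, this should already give at least $\lceil m/2\rceil+n-1$ unless $u$ and $w$ are tiny.
\item If $u=0$ (so $H_A$ covers $A$), then $M$ need not contain any forced product, but it must cover $B$ except a clique of $H_B$, so $|E(M)|\ge n-|B\setminus S_B|$. The cost of that clique in $H_B$ is compared with the saving.
\end{itemize}
Every case reduces to an integer optimization in a few parameters. The exceptional cases arise exactly where parity of $m$, a clique of size $2$, and $n=m+1$ allow equality with one less. Showing this cannot happen for even $m$, for $m$ odd with $n\ne m+1$ (when $m\ge5$), or with a saving of two is the delicate part, where I would track parity carefully and use that $M$ must cover at least $m-1$ vertices of $A$ whenever $H_A$ has no triangle.
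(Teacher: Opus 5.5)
\emph{There is a genuine gap.} Your characterization (i)--(iii) is exactly the paper's Lemma~\ref{lem:char}. Your generic construction is the paper's $\mathcal M(A)\cup\mathcal S$. Your mechanism for $m=3$ (one edge in $H_A$, one edge in $H_B$, $M=\{a_0\}\times(B\setminus T_B)$) does give the right set of size $n$.

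\textbf{The upper bound for odd $m\ge5$, $n=m+1$.} The mechanism you propose is to leave one $a_0$ uncovered by an $H_A$ with $\frac{m-1}2$ edges and rebalance in $H_B$. For $m\ge5$ it can never reach $\lceil m/2\rceil+n-2$, for any $n$. That target forces $|E(H_B)|+|E(M)|=n-1$. Since $a_0\notin T_A$, condition (iii) puts every vertex of $B\setminus S_B$ into $T_B$. Writing $k_B:=|B\setminus S_B|$, the bounds $|E(M)|\ge n-k_B$ and $|E(H_B)|\ge\max\{\binom{k_B}2,\lceil k_B/2\rceil\}$ then leave only $k_B=2$, with $H_B$ a single edge and $|E(M)|=n-2$. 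Now (iii) forces $M=\{a_0\}\times(B\setminus T_B)$, so $S_A=\{a_0\}$. Condition (i) then demands that $A\setminus\{a_0\}$ be a clique of $H_A$, that is, $\binom{m-1}2\le\frac{m-1}2$, which is false for $m\ge5$.

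The saving at $n=m+1$ comes from the other side:
\begin{itemize}
\item take $H_A=\emptyset$;
\item let $H_B$ be a perfect matching of $B$, which exists because $n=m+1$ is even and costs $\frac{m+1}2=\lceil m/2\rceil$;
\item let $M$ be a matching of size $m-1$ whose two missed $B$-vertices form one edge of $H_B$.
\end{itemize}
Then $T_B=B$ makes (iii) vacuous, and $|D|=(m-1)+\frac{m+1}2=\frac{3m-1}2$.

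\textbf{The lower bound.} It stays at the level of a plan, and its one concrete estimate is too weak. For odd $m$ and $u=1$, the quantity $uw+\lceil t_A/2\rceil+\lceil t_B/2\rceil$ equals $\lceil m/2\rceil+n-1-\lfloor (n-w)/2\rfloor$. So it fails for every $w\le n-2$, not just for tiny $u,w$; for example, $m=5$, $n=20$, $u=1$, $w=8$ gives $16<22$. Taking a maximum with a separate covering bound for $M$ does not repair this.

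What is needed is an additive count of $M$-degrees over $B$. If $u\ge1$, the $k_B$ vertices missed by $M$ lie in $T_B$, and each of the $w$ vertices of $B\setminus T_B$ has $M$-degree at least $u$. Hence $|E(M)|\ge n-k_B+(u-1)w$. Together with $|E(H_B)|\ge\binom{k_B}2$ and $u\ge m-2|E(H_A)|$, this disposes of $w\ge1$, except when $|E(H_A)|=\frac{m-1}2$. There it falls one short, and exactly the star-plus-clique argument above is required.

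The case $w=0$ is a separate minimization over $k_B$ of
$\max\{m-1-|E(H_A)|,\,n-k_B\}+\binom{k_B}2+\lceil(n-k_B)/2\rceil$, where the $H_B$-cost is $\lceil n/2\rceil$ when $k_B\le1$. The minimum is at $k_B=2$ when $m-1-|E(H_A)|\le n-2$. For odd $m$ and $H_A=\emptyset$, this is precisely where $n=m+1$ gives $\frac{3m-1}2$, while $n=m$ escapes because there $m-1>n-2$.

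\textbf{A false auxiliary claim.} Your tool ``$M$ covers at least $m-1$ vertices of $A$ whenever $H_A$ has no triangle'' is false. In your own $m=3$ set, $H_A$ is one edge and $M$ meets only one vertex of $A$. What (i) actually gives is $\binom{m-|S_A|}2\le|E(H_A)|$, hence $|E(M)|+|E(H_A)|\ge m-1$.
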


In particular, $\gamma(F_2(K_{m,n}))=\frac m2+n-1$ for every even $m$, and $\gamma(F_2(K_{3,n}))=n$ for every $n\ge3$.

Theorem~\ref{thm:main} shows that $\gamma(F_2(K_{m,n}))$ is given by the single formula $\lceil m/2\rceil+n-1$, with exactly two exceptions in which the domination number drops by one\rev{: $m=3$ (for every $n\ge3$), and $m\ge5$ odd with $n=m+1$ (see Subsection~\ref{sec:odd-exception} and Remark~\ref{rem:m3}).}

All our arguments rely on an explicit characterization of the dominating sets of $F_2(K_{m,n})$ (Lemma~\ref{lem:char}), which translates the domination of $F_2(K_{m,n})$ into a problem about a spanning subgraph of $K_{m,n}$ and two families of pairs inside the parts of the bipartition. We expect this approach to be useful for other graphs whose $2$-token graph has a similar structure, such as complete multipartite graphs (see~\cite{CR-multi}).

The rest of the paper is organized as follows. Section~\ref{sec:lemmas} fixes notation, gives the characterization of the dominating sets of $F_2(K_{m,n})$ (Lemma~\ref{lem:char}), and derives several auxiliary lemmas from it. Theorem~\ref{thm:main} is proved in Section~\ref{sec:even} for even $m$, in Section~\ref{sec:small} for $m=3$, and in Section~\ref{sec:odd} for odd $m\ge5$; the latter is divided into the exceptional case $n=m+1$ (Subsection~\ref{sec:odd-exception}) and the general case $n\ne m+1$ (Subsection~\ref{sec:odd-general}). In Section~\ref{sec:computation} we report a computational verification of Theorem~\ref{thm:main} for small values of $m$ and $n$, and Section~\ref{sec:conclusion} contains some concluding remarks.

\section{Auxiliary lemmas}\label{sec:lemmas}
Before stating our auxiliary lemmas, we introduce the notation and partition of $F_2(K_{m,n})$ that will be used throughout the paper.

Let $G=K_{m,n}$ with bipartition $A=\{a_1,\dots,a_m\}$, $B=\{b_1,\dots,b_n\}$, $m\le n$. The vertices of $F_2(K_{m,n})$ split into three classes:
\[
\mathcal A=\binom A2,\qquad \mathcal B=\binom B2,\qquad \mathcal X=\{\{a,b\}: a\in A,\ b\in B\}.
\]
Every edge of $F_2(K_{m,n})$ joins a vertex of $\mathcal X$ with a vertex of $\mathcal A\cup\mathcal B$; in particular, $F_2(K_{m,n})$ is bipartite with parts $\mathcal X$ and $\mathcal A\cup\mathcal B$ (see~\cite{dealba2020} for more details).

For a set $D\subseteq V(F_2(K_{m,n}))$, write $D_\mathcal A:=D\cap\mathcal A$, $D_\mathcal B:=D\cap\mathcal B$ and $D_\mathcal X:=D\cap\mathcal X$. Since every vertex of $F_2(K_{m,n})$ is a pair of vertices of $K_{m,n}$, it is convenient to regard $D_\mathcal A$, $D_\mathcal B$ and $D_\mathcal X$ as sets of edges of the complete graph on $A$, the complete graph on $B$, and $K_{m,n}$, respectively. Let $H:=(A\cup B,\,D_\mathcal X)$ denote the spanning subgraph of $K_{m,n}$ with edge set $D_\mathcal X$, and for $v\in A\cup B$ let $d_H(v)$ be the degree of $v$ in $H$, that is, the number of elements of $D_\mathcal X$ containing $v$. Let
\[
A_0:=\{a\in A: d_H(a)=0\},\qquad B_0:=\{b\in B: d_H(b)=0\},\qquad B_1:=\{b\in B: d_H(b)=1\}.
\]
Thus $A_0$ and $B_0$ are the sets of isolated vertices of $H$ lying in $A$ and in $B$, respectively; we set $k_A:=|A_0|$ and $k_B:=|B_0|$. Let
\[
C_\mathcal A:=\bigcup_{\{a,a'\}\in D_\mathcal A}\{a,a'\}\subseteq A,\qquad C_\mathcal B:=\bigcup_{\{b,b'\}\in D_\mathcal B}\{b,b'\}\subseteq B
\]
be the sets of vertices \emph{covered} by $D_\mathcal A$ and $D_\mathcal B$, respectively, and let $\overline{C}_\mathcal A:=A\setminus C_\mathcal A$ and $\overline{C}_\mathcal B:=B\setminus C_\mathcal B$ denote the sets of \emph{uncovered} vertices. Finally, throughout the paper, we use the following parameters
\[
x:=|D_\mathcal X|,\qquad y:=|D_\mathcal A|,\qquad z:=|D_\mathcal B|,\qquad s:=|\overline{C}_\mathcal A|,\qquad t:=|\overline{C}_\mathcal B|,
\]
so that $|D|=x+y+z$. We use these abbreviations mainly in some of the proofs; in the statements of our results we write the corresponding cardinalities explicitly. Since each pair covers at most two vertices, we have
\begin{equation}\label{eq:st}
|\overline{C}_\mathcal A|\ge m-2|D_\mathcal A|\qquad\text{and}\qquad |\overline{C}_\mathcal B|\ge n-2|D_\mathcal B|.
\end{equation}

\begin{example}\label{ex:H}
Let $m=3$, $n=4$, $A=\{a_1,a_2,a_3\}$ and $B=\{b_1,b_2,b_3,b_4\}$, and consider
\[
D=\big\{\{a_1,a_2\},\,\{b_1,b_2\},\,\{a_3,b_3\},\,\{a_3,b_4\}\big\}.
\]
Then $D_\mathcal A=\{\{a_1,a_2\}\}$, $D_\mathcal B=\{\{b_1,b_2\}\}$ and $D_\mathcal X=\{\{a_3,b_3\},\{a_3,b_4\}\}$, so $H$ is the spanning subgraph of $K_{3,4}$ with the two edges $a_3b_3$ and $a_3b_4$ (see Figure~\ref{fig:H}). Hence $d_H(a_3)=2$, $d_H(b_3)=d_H(b_4)=1$, and all other vertices are isolated in $H$; thus
\[
A_0=\{a_1,a_2\},\quad B_0=\{b_1,b_2\},\quad k_A=k_B=2.
\]
Moreover, $C_\mathcal A=\{a_1,a_2\}$, $\overline{C}_\mathcal A=\{a_3\}$, $C_\mathcal B=\{b_1,b_2\}$ and $\overline{C}_\mathcal B=\{b_3,b_4\}$, so that $x=2$, $y=z=1$, $s=1$ and $t=2$. Note that the conditions of Lemma~\ref{lem:char} below are satisfied: $\binom{A_0}2=\{\{a_1,a_2\}\}\subseteq D_\mathcal A$, $\binom{B_0}2=\{\{b_1,b_2\}\}\subseteq D_\mathcal B$, and $\{a_3,b_3\},\{a_3,b_4\}\in D_\mathcal X$. Hence $D$ is a dominating set of $F_2(K_{3,4})$ of size $4$; by Proposition~\ref{prop:m3}, it is in fact a minimum one.
\end{example}

\begin{figure}[ht]
\centering
\begin{tikzpicture}[
  every node/.style={font=\small},
  filled/.style={circle,draw,fill=black,inner sep=1.6pt},
  hollow/.style={circle,draw,fill=white,inner sep=1.6pt},
  hedge/.style={very thick},
  pair/.style={dashed,red,thick}]
  \node[hollow,label=above:$a_1$] (a1) at (1,2) {};
  \node[hollow,label=above:$a_2$] (a2) at (2.5,2) {};
  \node[filled,label=above:$a_3$] (a3) at (4,2) {};
  \node[hollow,label=below:$b_1$] (b1) at (0.25,0) {};
  \node[hollow,label=below:$b_2$] (b2) at (1.75,0) {};
  \node[filled,label=below:$b_3$] (b3) at (3.25,0) {};
  \node[filled,label=below:$b_4$] (b4) at (4.75,0) {};
  \draw[hedge] (a3) -- (b3);
  \draw[hedge] (a3) -- (b4);
  \draw[pair] (a1) to[bend right=35] (a2);
  \draw[pair] (b1) to[bend left=35] (b2);
  \node[anchor=west] at (5.6,2) {$A$};
  \node[anchor=west] at (5.6,0) {$B$};
\end{tikzpicture}
\caption{The graph $H$ of Example~\ref{ex:H}: its edges (solid) are the elements of $D_\mathcal X$, and its isolated vertices (white) form $A_0\cup B_0$. The dashed curves represent the pairs of $D_\mathcal A$ and $D_\mathcal B$, which are not edges of $H$.}
\label{fig:H}
\end{figure}

The following characterization is the basis of all our arguments about dominating sets.

\begin{lemma}\label{lem:char}
A set $D\subseteq V(F_2(K_{m,n}))$ is a dominating set of $F_2(K_{m,n})$ if and only if the following three conditions hold:
\begin{enumerate}[label=\textup{(\roman*)}]
\item $\binom{A_0}{2}\subseteq D_\mathcal A$;
\item $\binom{B_0}{2}\subseteq D_\mathcal B$;
\item $\{a,b\}\in D_\mathcal X$ for every $a\in\overline{C}_\mathcal A$ and every $b\in\overline{C}_\mathcal B$.
\end{enumerate}
\end{lemma}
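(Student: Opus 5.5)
We prove Lemma~\ref{lem:char} by checking, class by class, exactly when a vertex of $F_2(K_{m,n})$ outside $D$ is dominated. Since $F_2(K_{m,n})$ is bipartite with parts $\mathcal X$ and $\mathcal A\cup\mathcal B$, a vertex of $\mathcal A\cup\mathcal B$ can only be dominated by an element of $D_\mathcal X$, and a vertex of $\mathcal X$ only by an element of $D_\mathcal A\cup D_\mathcal B$. So we first make the neighbourhoods explicit.

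For $\{a,a'\}\in\mathcal A$, its neighbours are obtained by sliding one token from $a$ or $a'$ to some $b\in B$. These are exactly $\{a,b\}$ and $\{a',b\}$ with $b\in B$, that is, the elements of $\mathcal X$ containing $a$ or $a'$. The same holds symmetrically for $\{b,b'\}\in\mathcal B$. For $\{a,b\}\in\mathcal X$, sliding the token on $b$ to some $a'\neq a$ gives $\{a,a'\}$, and sliding the token on $a$ to some $b'\ne b$ gives $\{b,b'\}$. So $N(\{a,b\})=\{\{a,a'\}:a'\in A\setminus\{a\}\}\cup\{\{b,b'\}:b'\in B\setminus\{b\}\}$.

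Next we translate domination of each class.
\begin{itemize}
\item A vertex $\{a,a'\}\in\mathcal A$ is dominated iff it lies in $D_\mathcal A$ or $d_H(a)+d_H(a')\ge1$. Hence every vertex of $\mathcal A$ is dominated iff every pair with $d_H(a)=d_H(a')=0$ lies in $D_\mathcal A$, which is condition (i). Condition (ii) follows the same way for $\mathcal B$.
\item A vertex $\{a,b\}\in\mathcal X$ is dominated iff it lies in $D_\mathcal X$, or some pair of $D_\mathcal A$ contains $a$ (i.e.\ $a\in C_\mathcal A$), or some pair of $D_\mathcal B$ contains $b$ (i.e.\ $b\in C_\mathcal B$).
\end{itemize}
Thus every vertex of $\mathcal X$ is dominated iff each $\{a,b\}$ with $a\in\overline C_\mathcal A$ and $b\in\overline C_\mathcal B$ lies in $D_\mathcal X$, which is condition (iii).

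Combining the three classes gives both directions of the equivalence at once. The only point needing care is the neighbourhood computation, in particular that $\{a,a'\}$ is adjacent to every $\{a,b\}$. This holds because $\{a,a'\}\triangle\{a,b\}=\{a',b\}$, which is an edge of $K_{m,n}$ for every $b\in B$, so no degree-0 subtlety arises. Everything else is routine bookkeeping, and we do not expect a genuine obstacle.
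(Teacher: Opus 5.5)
Your proof is correct and follows essentially the same argument as the paper. Like the paper, you make the neighbourhoods of $\mathcal A$, $\mathcal B$ and $\mathcal X$ explicit, then observe that $\{a,a'\}$ is dominated iff it lies in $D_\mathcal A$ or $d_H(a)+d_H(a')\ge1$, and that $\{a,b\}$ is dominated iff it lies in $D_\mathcal X$ or $a\in C_\mathcal A$ or $b\in C_\mathcal B$; conditions (i)--(iii) follow directly from these.
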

\begin{proof}
Two vertices of $F_2(K_{m,n})$ are adjacent if and only if they share exactly one element and the two remaining elements lie on different sides of the bipartition $\{A,B\}$. Hence:
\begin{itemize}
\item the neighbors of $\{a,a'\}\in\mathcal A$ are the pairs $\{a,b\}$ and $\{a',b\}$ with $b\in B$;
\item the neighbors of $\{b,b'\}\in\mathcal B$ are the pairs $\{a,b\}$ and $\{a,b'\}$ with $a\in A$;
\item the neighbors of $\{a,b\}\in\mathcal X$ are the pairs $\{a,a'\}$ with $a'\in A\setminus\{a\}$ and the pairs $\{b,b'\}$ with $b'\in B\setminus\{b\}$.
\end{itemize}
Consequently, $\{a,a'\}\in\mathcal A$ is dominated by $D$ if and only if $\{a,a'\}\in D_\mathcal A$ or $d_H(a)+d_H(a')\ge1$, that is, if and only if $\{a,a'\}\in D_\mathcal A$ or $\{a,a'\}\not\subseteq A_0$. Hence every vertex of $\mathcal A$ is dominated if and only if (i) holds, and, symmetrically, every vertex of $\mathcal B$ is dominated if and only if (ii) holds. Finally, $\{a,b\}\in\mathcal X$ is dominated by $D$ if and only if $\{a,b\}\in D_\mathcal X$, or $a\in C_\mathcal A$, or $b\in C_\mathcal B$. Hence every vertex of $\mathcal X$ is dominated if and only if (iii) holds.
\end{proof}

Condition~(iii) says that, in the graph $H$, every vertex of $\overline{C}_\mathcal B$ is adjacent to every vertex of $\overline{C}_\mathcal A$. In particular, if $D$ is a dominating set, then
\begin{equation}\label{eq:forced}
d_H(b)\ge|\overline{C}_\mathcal A|\quad\text{for every } b\in\overline{C}_\mathcal B,\qquad\text{and}\qquad |D_\mathcal X|\ge|\overline{C}_\mathcal A|\cdot|\overline{C}_\mathcal B|.
\end{equation}

We now derive some consequences of Lemma~\ref{lem:char}.

\begin{lemma}\label{lem:basic}
Let $D$ be a dominating set of $F_2(K_{m,n})$. Then
\[
|D_\mathcal X|+|D_\mathcal B|\ge n-1.
\]
Moreover, if equality holds, then $k_B\in\{1,2\}$, $|D_\mathcal X|=n-k_B$ and $|D_\mathcal B|=\binom{k_B}2$.
\end{lemma}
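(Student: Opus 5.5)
We prove the inequality by counting, separately, the $B$-vertices that are not isolated in $H$ and the pairs of $D_\mathcal B$ that Lemma~\ref{lem:char}(ii) forces among the isolated ones. Each edge of $H$ has exactly one endpoint in $B$, so
\[
|D_\mathcal X|=\sum_{b\in B}d_H(b)\ \ge\ |B\setminus B_0|=n-k_B .
\]
By Lemma~\ref{lem:char}(ii) we have $\binom{B_0}{2}\subseteq D_\mathcal B$, hence $|D_\mathcal B|\ge\binom{k_B}{2}$. Adding the two bounds gives
\[
|D_\mathcal X|+|D_\mathcal B|\ \ge\ n-k_B+\binom{k_B}{2}.
\]

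We now minimize the right-hand side over integers $k_B\ge 0$. Set $f(k)=\binom{k}{2}-k$. Then $f(0)=0$, $f(1)=f(2)=-1$, $f(3)=0$, and $f(k+1)-f(k)=k-1\ge 1$ for $k\ge2$, so $f$ is strictly increasing from $k=2$ on. Hence $f(k)\ge -1$ for all $k\ge0$, with equality exactly when $k\in\{1,2\}$. This gives $|D_\mathcal X|+|D_\mathcal B|\ge n-1$.

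For the equality case, suppose $|D_\mathcal X|+|D_\mathcal B|=n-1$. Then every inequality in the chain must be tight. First, $f(k_B)=-1$, which forces $k_B\in\{1,2\}$. Second, the two individual bounds $|D_\mathcal X|\ge n-k_B$ and $|D_\mathcal B|\ge\binom{k_B}{2}$ must both hold with equality. If either were strict, the sum would exceed $n-k_B+\binom{k_B}{2}=n-1$. Therefore $|D_\mathcal X|=n-k_B$ and $|D_\mathcal B|=\binom{k_B}{2}$.

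The argument has no genuinely hard step. The only points needing care are that each edge of $H$ contributes exactly one $B$-endpoint, so the degree sum counts $|D_\mathcal X|$ exactly, and that the case $k_B=0$ gives $n$ rather than $n-1$, so it is correctly excluded from equality.
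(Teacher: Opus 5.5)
Your proof is correct and follows the paper's argument: you bound $|D_\mathcal X|\ge n-k_B$ by the degree sum over $B$ and $|D_\mathcal B|\ge\binom{k_B}{2}$ by Lemma~\ref{lem:char}(ii), then show $\binom{k}{2}-k\ge -1$ with equality exactly for $k\in\{1,2\}$. Your equality analysis also matches the paper's. The only difference is cosmetic: you verify the minimum via the increments $f(k+1)-f(k)=k-1$, whereas the paper uses the closed form $\binom{k}{2}-k=\tfrac{k(k-3)}{2}$.
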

\begin{proof}
By Lemma~\ref{lem:char}(ii), $\binom{B_0}{2}\subseteq D_\mathcal B$, so $|D_\mathcal B| \ge\binom{k_B}2$. On the other hand, every element of $D_\mathcal X$ contains exactly one vertex of $B$, and every $b\in B\setminus B_0$ satisfies $d_H(b)\ge 1$; hence
\[
|D_\mathcal X|=\sum_{b\in B} d_H(b) \ge n-k_B.
\]
Adding these two bounds yields $|D_\mathcal X|+|D_\mathcal B|\ge \lambda(k_B):=n-k_B+\binom{k_B}2$. Since $\binom k2-k=\frac{k(k-3)}2$ equals $-1$ for $k\in\{1,2\}$ and is non-negative for every other non-negative integer $k$, we get $|D_\mathcal X|+|D_\mathcal B|\ge n-1$, and equality yields $k_B\in\{1,2\}$ together with equality in both bounds.
\end{proof}

\begin{lemma}\label{lem:forcedA}
Let $D$ be a dominating set of $F_2(K_{m,n})$. If $\overline{C}_\mathcal A\neq\emptyset$, then $B_0\subseteq C_\mathcal B$; consequently, $|D_\mathcal B|\ge\lceil k_B/2\rceil$.
\end{lemma}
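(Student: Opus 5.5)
The plan is a direct argument by contradiction from condition~(iii) of Lemma~\ref{lem:char}. Assume $\overline{C}_\mathcal A\neq\emptyset$, fix some $a\in\overline{C}_\mathcal A$, and take any $b\in B_0$. We want to show $b\in C_\mathcal B$. Suppose not, so $b\in\overline{C}_\mathcal B$. Then $a\in\overline{C}_\mathcal A$ and $b\in\overline{C}_\mathcal B$, so Lemma~\ref{lem:char}(iii) forces $\{a,b\}\in D_\mathcal X$. This means $d_H(b)\ge1$, which contradicts $b\in B_0$. Equivalently, we can invoke \eqref{eq:forced}: every $b\in\overline{C}_\mathcal B$ satisfies $d_H(b)\ge|\overline{C}_\mathcal A|\ge1$, so $\overline{C}_\mathcal B\cap B_0=\emptyset$, that is, $B_0\subseteq C_\mathcal B$.

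For the consequence, I would count covered vertices. Each pair in $D_\mathcal B$ covers at most two vertices of $B$, so $|C_\mathcal B|\le 2|D_\mathcal B|$. Together with $B_0\subseteq C_\mathcal B$ this gives $k_B\le 2|D_\mathcal B|$, and since $|D_\mathcal B|$ is an integer, $|D_\mathcal B|\ge\lceil k_B/2\rceil$.

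There is no real obstacle here. The only point needing care is that the hypothesis $\overline{C}_\mathcal A\ne\emptyset$ is exactly what supplies a vertex $a$ for condition~(iii). When $\overline{C}_\mathcal A=\emptyset$, condition~(iii) is vacuous and the conclusion can fail; in that case only $|D_\mathcal B|\ge\binom{k_B}{2}$ from (ii) remains.
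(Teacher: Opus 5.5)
Your proof is correct and matches the paper's argument: fix $a^*\in\overline{C}_\mathcal A$, use $d_H(b)=0$ and Lemma~\ref{lem:char}(iii) to rule out $b\in\overline{C}_\mathcal B$ for each $b\in B_0$, and conclude from $k_B\le|C_\mathcal B|\le 2|D_\mathcal B|$. Your closing remark is also accurate: the set of Proposition~\ref{prop:even-upper} has $\overline{C}_\mathcal A=\emptyset$, $k_B=1$ and $D_\mathcal B=\emptyset$, so without the hypothesis the conclusion fails.
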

\begin{proof}
Let $a^*\in\overline{C}_\mathcal A$ and $b\in B_0$. Since $d_H(b)=0$, we have $\{a^*,b\}\notin D_\mathcal X$. Thus, by Lemma~\ref{lem:char}(iii), we have $b\notin\overline{C}_\mathcal B$, that is, $b\in C_\mathcal B$. Hence $B_0\subseteq C_\mathcal B$, which implies $k_B\le |C_\mathcal B|\le 2|D_\mathcal B|$.
\end{proof}

\begin{lemma}\label{lem:symmetric}
Let $D$ be a dominating set of $F_2(K_{m,n})$. Then 
\[
|D_\mathcal X| + |D_\mathcal A|\ge m-1.
\]
\end{lemma}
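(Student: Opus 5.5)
This is the mirror image of Lemma~\ref{lem:basic}, with the roles of $A$ and $B$ exchanged. The characterization in Lemma~\ref{lem:char} is symmetric in $A$ and $B$: condition (i) is the analogue of (ii), and condition (iii) is symmetric in $a$ and $b$. The counting in Lemma~\ref{lem:basic} used only condition (ii) and the fact that each element of $D_\mathcal X$ contains exactly one vertex of $B$. So I will rerun that argument on the $A$ side.

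First, condition (i) of Lemma~\ref{lem:char} gives $\binom{A_0}{2}\subseteq D_\mathcal A$, hence $|D_\mathcal A|\ge\binom{k_A}{2}$.

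Second, every element of $D_\mathcal X$ contains exactly one vertex of $A$, so
\[
|D_\mathcal X|=\sum_{a\in A} d_H(a)\ge m-k_A,
\]
since every $a\in A\setminus A_0$ has $d_H(a)\ge1$.

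Adding the two bounds yields $|D_\mathcal X|+|D_\mathcal A|\ge m-k_A+\binom{k_A}{2}$. As in Lemma~\ref{lem:basic}, $\binom{k}{2}-k=\frac{k(k-3)}{2}\ge -1$ for every non-negative integer $k$, with equality only for $k\in\{1,2\}$. Hence $|D_\mathcal X|+|D_\mathcal A|\ge m-1$.

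There is no real obstacle here. The only point to check is that the hypothesis $m\le n$ was never used in Lemma~\ref{lem:basic}, so the argument transfers verbatim to the $A$ side. One could also note the analogous equality case ($k_A\in\{1,2\}$, $|D_\mathcal X|=m-k_A$, $|D_\mathcal A|=\binom{k_A}2$), but it is not required by the statement.
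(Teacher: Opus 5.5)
Your proof is correct and uses the same two ingredients as the paper: $|D_\mathcal A|\ge\binom{k_A}{2}$ from Lemma~\ref{lem:char}(i), and $|D_\mathcal X|=\sum_{a\in A}d_H(a)\ge m-k_A$. The only difference is cosmetic: you add the two bounds and use $\binom{k}{2}-k\ge-1$ as in Lemma~\ref{lem:basic}, whereas the paper first deduces $k_A\le|D_\mathcal A|+1$ from $\binom{k_A}{2}\le|D_\mathcal A|$ and then substitutes this into the degree bound.
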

\begin{proof}
By Lemma~\ref{lem:char}(i), $\binom{A_0}{2}\subseteq D_\mathcal A$, so $\binom{k_A}2\le y$; and, as in the proof of Lemma~\ref{lem:basic}, $x=\sum_{a\in A}d_H(a)\ge m-k_A$. If $k_A\ge y+2$, then $\binom{k_A}2\ge\binom{y+2}2=\frac{(y+2)(y+1)}2>y$, a contradiction. Hence $k_A\le y+1$, and $x\ge m-k_A\ge m-y-1$.
\end{proof}

\begin{lemma}\label{lem:zeroz}
Let $D$ be a dominating set of $F_2(K_{m,n})$ with $D_\mathcal A=\emptyset$ and $\overline{C}_\mathcal B\neq\emptyset$. Then $|D|\ge n+m-2$.
\end{lemma}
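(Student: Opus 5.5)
The plan is to count $|D_\mathcal X|$ degree by degree over $B$, separating the vertices of $\overline{C}_\mathcal B$, which condition~(iii) forces to have large degree, from the rest. The extra edges this forces should outweigh any saving obtained from isolated vertices of $B$.

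\textbf{Setting up.} Since $D_\mathcal A=\emptyset$, we have $C_\mathcal A=\emptyset$, hence $\overline{C}_\mathcal A=A$ and $s=m$. Write $t=|\overline{C}_\mathcal B|\ge 1$. By Lemma~\ref{lem:char}(iii), every $b\in\overline{C}_\mathcal B$ is adjacent in $H$ to all of $A$, so $d_H(b)= m$ for such $b$; this is the bound \eqref{eq:forced}. Moreover, $\overline{C}_\mathcal A\neq\emptyset$, so Lemma~\ref{lem:forcedA} gives $B_0\subseteq C_\mathcal B$.

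\textbf{Lower bound on $|D_\mathcal X|$.} The $n-t$ vertices of $C_\mathcal B$ include all $k_B$ isolated vertices of $B$, and every other vertex of $C_\mathcal B$ has degree at least $1$. Summing degrees over $B$ therefore gives
\[
|D_\mathcal X|=\sum_{b\in B}d_H(b)\ \ge\ mt+(n-t-k_B).
\]

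\textbf{Lower bound on $|D_\mathcal B|$.} By Lemma~\ref{lem:char}(ii), $|D_\mathcal B|\ge\binom{k_B}{2}$.

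\textbf{Combining.} Using $|D_\mathcal A|=0$ and the two bounds above,
\[
|D|\ \ge\ n+(m-1)t-k_B+\binom{k_B}{2}\ \ge\ n+(m-1)-k_B+\binom{k_B}{2},
\]
where the second inequality uses $t\ge1$. It remains to check that $\binom{k}{2}-k\ge -1$ for every nonnegative integer $k$. This is the same elementary inequality used in the proof of Lemma~\ref{lem:basic}: the quantity $k(k-3)/2$ equals $0,-1,-1,0$ for $k=0,1,2,3$ and is positive for $k\ge4$. Hence $|D|\ge n+m-2$.

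The only step requiring care is that the $k_B$ isolated vertices of $B$ reduce the count for $|D_\mathcal X|$. This loss is compensated by the forced pairs in $D_\mathcal B$ from condition~(ii), up to a deficit of at most one. That single unit is then absorbed by the slack $(m-1)t\ge m-1$ coming from condition~(iii).
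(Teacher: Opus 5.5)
Your proof is correct and follows essentially the same route as the paper: a degree count over $B$ that separates $\overline{C}_\mathcal B$ (degree $m$) from $C_\mathcal B\supseteq B_0$, combined with $|D_\mathcal B|\ge\binom{k_B}2$ and $t\ge1$. The only difference is that the paper also uses $|D_\mathcal B|\ge\lceil k_B/2\rceil$ through its function $\theta(k)=\max\{\binom k2,\lceil k/2\rceil\}$; you rightly drop this, since $\binom{k}{2}-k\ge -1$ already gives the bound.
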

\begin{proof}
Since $D_\mathcal A=\emptyset$, we have $y=0$ and $\overline{C}_\mathcal A=A$, so $s=m$, and Lemma~\ref{lem:char}(iii) gives $d_H(b)=m$ for every $b\in\overline{C}_\mathcal B$. Since $\overline{C}_\mathcal A\neq\emptyset$, Lemma~\ref{lem:forcedA} gives $B_0\subseteq C_\mathcal B$ and $z\ge\lceil k_B/2\rceil$; in particular, $B_0$ and $\overline{C}_\mathcal B$ are disjoint. Moreover, $z\ge\binom{k_B}2$ by Lemma~\ref{lem:char}(ii). Summing $d_H(b)$ over $b\in B$, we obtain
\begin{align*}
x &=\sum_{b \in \overline{C}_\mathcal B} d_H(b)+\sum_{b \in C_\mathcal B} d_H(b),\\
x &\ge mt+(n-t-k_B),
\end{align*}
and therefore
\[
|D|=x+z\ge n+(m-1)t-k_B+\theta(k_B),\qquad\text{where}\quad \theta(k):=\max\Big\{\binom k2,\Big\lceil\frac k2\Big\rceil\Big\}.
\]
A direct check shows that $\theta(k)-k\ge-1$ for every integer $k\ge0$ (with equality only for $k=2$). Since $\overline{C}_\mathcal B\neq\emptyset$, we have $t\ge1$, and we conclude that $|D|\ge n+(m-1)-1=n+m-2$.
\end{proof}

The following arithmetic lemma is used in Section~\ref{sec:odd} to bound $|D_\mathcal X|+|D_\mathcal B|$ when every vertex of $B$ is covered by $D_\mathcal B$; in that situation the function $\psi(k_B)$ in the following lemma is a lower bound for $|D_\mathcal B|$ (Remark~\ref{rem:hbound}).

\begin{lemma}\label{lem:gmono}
Fix integers $n\ge 1$ and $0\le P < n$, and for $k\ge 0$ define
\[
\varphi(k):=\max\{P,n-k\}+\psi(k),\quad \text{where}\quad \psi(k):=\begin{cases}
\left\lceil\dfrac n2\right\rceil & \text{if } k\le 1,\\[6pt]
\dbinom k2+\left\lceil\dfrac{n-k}2\right\rceil & \text{if } k\ge 2.
\end{cases}
\]
Then $\varphi(1)=\varphi(0)-1$, and $\varphi(k)$ is non-decreasing for $k\ge 2$. Furthermore, if $P\le n-2$, then $\varphi(2)=\varphi(1)-1$ and, consequently, $\min_{k\ge 0} \{\varphi(k)\} = \varphi(2)$.
\end{lemma}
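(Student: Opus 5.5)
This is a direct computation; the plan is to evaluate $\varphi$ at $0,1,2$ explicitly and then compare consecutive values for $k\ge2$.

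\textbf{Small values.} Since $P<n$, we have $P\le n-1$, so $\max\{P,n\}=n$ and $\max\{P,n-1\}=n-1$. Also $\psi(0)=\psi(1)=\lceil n/2\rceil$. Hence $\varphi(0)=n+\lceil n/2\rceil$ and $\varphi(1)=n-1+\lceil n/2\rceil=\varphi(0)-1$. For $k=2$ we have $\psi(2)=1+\lceil (n-2)/2\rceil=\lceil n/2\rceil$. If $P\le n-2$, then $\max\{P,n-2\}=n-2$, so $\varphi(2)=n-2+\lceil n/2\rceil=\varphi(1)-1$.

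\textbf{Monotonicity for $k\ge2$.} I will show $\varphi(k+1)-\varphi(k)\ge0$ for every $k\ge2$. The first term $\max\{P,n-k\}$ is non-increasing in $k$ and drops by at most $1$ per step. The binomial term increases by $\binom{k+1}2-\binom k2=k\ge2$. The ceiling term satisfies $\lceil (n-k-1)/2\rceil-\lceil (n-k)/2\rceil\ge-1$. So the increment is at least $-1+k-1\ge0$ for $k\ge2$.

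One caveat: when $k>n$ the expression $\lceil (n-k)/2\rceil$ becomes negative. The same $-1$ bound per step still holds for ceilings of halves of all integers, so the argument is unaffected.

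\textbf{Conclusion.} Under $P\le n-2$, the chain $\varphi(0)>\varphi(1)>\varphi(2)\le\varphi(3)\le\cdots$ gives $\min_{k\ge0}\varphi(k)=\varphi(2)$. There is no genuine obstacle here; the only point requiring care is the bound on the first term in the monotonicity step, which is why the increment $k\ge2$ is needed.
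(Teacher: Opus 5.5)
Your proposal is correct and follows essentially the same route as the paper. It evaluates $\varphi(0),\varphi(1),\varphi(2)$ explicitly, and your observation $\psi(2)=\lceil n/2\rceil$ is just a repackaging of the paper's identity $\lceil (n-2)/2\rceil-\lceil n/2\rceil=-1$. It then uses the same three-term bound $\varphi(k+1)-\varphi(k)\ge -1+k-1\ge 0$ for $k\ge 2$; your remark that the ceiling bound still holds when $k>n$ makes explicit a point the paper leaves implicit.
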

\begin{proof}
Since $P<n$, we have $\max\{P,n\}=n$ and $\max\{P,n-1\}=n-1$. Thus, $\varphi(0)=n+\lceil n/2\rceil$ and $\varphi(1)=(n-1)+\lceil n/2\rceil=\varphi(0)-1$. If $P\le n-2$, then $\max\{P,n-2\}=n-2$, and evaluating at $k=2$ yields
\[
\varphi(2)-\varphi(1)=\left((n-2)+1+\left\lceil\frac{n-2}{2}\right\rceil\right)-\left((n-1)+\left\lceil\frac n2\right\rceil\right)=\left\lceil\frac{n-2}{2}\right\rceil-\left\lceil\frac n2\right\rceil=-1,
\]
which proves that $\varphi(2)=\varphi(1)-1$.

For any $k\ge 2$, the discrete difference $\varphi(k+1)-\varphi(k)$ expands to
\[
\left(\max\{P,n-k-1\}-\max\{P, n-k\}\right)+\left(\binom{k+1}2-\binom k2\right)+\left(\left\lceil\frac{n-k-1}2\right\rceil-\left\lceil\frac{n-k}2\right\rceil\right).
\]
The first term is either $0$ or $-1$, the second term equals $k$, and the third term is either $0$ or $-1$. Therefore, 
\[
\varphi(k+1)-\varphi(k)\ge -1 + k - 1 = k-2\ge 0 \quad \text{for all } k\ge 2,
\]
which shows that $\varphi$ is non-decreasing on $[2,\infty)$. The statement regarding the global minimum follows immediately.
\end{proof}

\begin{remark}\label{rem:hbound}
Let $D$ be a dominating set of $F_2(K_{m,n})$ with $\overline{C}_\mathcal B=\emptyset$, that is, $t=0$. Then $|D_\mathcal B|\ge \psi(k_B)$, with $\psi$ as in Lemma~\ref{lem:gmono}. Indeed, since $t=0$, we have $C_\mathcal B=B$, that is, the pairs of $D_\mathcal B$ cover all $n$ vertices of $B$. Each pair covers two vertices, so $|D_\mathcal B|\ge\lceil n/2\rceil$, which is the claim when $k_B\le1$. If $k_B\ge2$, then $\binom{B_0}{2}\subseteq D_\mathcal B$ by Lemma~\ref{lem:char}(ii), and these $\binom{k_B}2$ pairs cover no vertex of $B\setminus B_0$. The $n-k_B$ vertices of $B\setminus B_0$ are therefore covered by other pairs of $D_\mathcal B$, each covering at most two of them, so $|D_\mathcal B|\ge\binom{k_B}2+\lceil (n-k_B)/2\rceil$.
\end{remark}
\begin{corollary}[Value of $\varphi(2)$]\label{cor:g2}
Let $m$ be an odd integer, $n\ge m$, and set $r:=n-m+1$. With $\varphi$ defined as in Lemma~\ref{lem:gmono} for this $n$ and any $P\le n-2$, we have
\[
\varphi(2)=\frac{m-1}2+n+\begin{cases}
\dfrac{r-4}2 & \text{if } r \text{ is even},\\[6pt]
\dfrac{r-3}2 & \text{if } r \text{ is odd}.
\end{cases}
\]
In particular, $\varphi(2)\ge\frac{m-1}2+n-1$, with equality if and only if $r\in\{1,2\}$, and $\varphi(2)\ge\frac{m-1}2+n$ for all $r\ge 3$.
\end{corollary}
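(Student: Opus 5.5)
This is a direct computation from the definition in Lemma~\ref{lem:gmono}. Since $P\le n-2$, we have $\max\{P,n-2\}=n-2$. We also have $\psi(2)=\binom22+\lceil (n-2)/2\rceil=1+\lceil n/2\rceil-1=\lceil n/2\rceil$. Hence
\[
\varphi(2)=n-2+\left\lceil\frac n2\right\rceil .
\]
The plan is to rewrite $\lceil n/2\rceil$ in terms of $m$ and $r$. Since $n=m-1+r$ and $m-1$ is even (because $m$ is odd), we get $\lceil n/2\rceil=\frac{m-1}2+\lceil r/2\rceil$.

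The next step is to split according to the parity of $r$:
\begin{itemize}
\item If $r$ is even, then $\lceil r/2\rceil=r/2$, so $\varphi(2)=\frac{m-1}2+n+\frac{r-4}2$.
\item If $r$ is odd, then $\lceil r/2\rceil=(r+1)/2$, so $\varphi(2)=\frac{m-1}2+n+\frac{r-3}2$.
\end{itemize}
This gives the displayed formula.

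For the ``in particular'' part, set $\delta(r)$ to be the correction term, so that $\varphi(2)=\frac{m-1}2+n+\delta(r)$. The hypothesis $n\ge m$ gives $r\ge1$, so it suffices to examine $\delta$ on positive integers:
\begin{itemize}
\item $\delta(1)=\delta(2)=-1$.
\item For odd $r\ge3$, $\delta(r)=(r-3)/2\ge0$.
\item For even $r\ge4$, $\delta(r)=(r-4)/2\ge0$.
\end{itemize}
Therefore $\delta(r)\ge-1$ for all $r\ge1$, with equality exactly when $r\in\{1,2\}$, and $\delta(r)\ge0$ for all $r\ge3$. These are precisely the two claimed inequalities. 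No step is a real obstacle; the only points needing care are the parity bookkeeping and noting that $r\ge1$ follows from $n\ge m$.
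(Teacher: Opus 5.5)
Your proof is correct and follows essentially the same route as the paper: evaluate $\varphi(2)$ directly using $\max\{P,n-2\}=n-2$, peel off $\frac{m-1}{2}$ from the ceiling because $m-1$ is even, and split on the parity of $r$. The only cosmetic difference is that you first simplify $\psi(2)=\lceil n/2\rceil$ and write it as $\frac{m-1}2+\lceil r/2\rceil$, whereas the paper keeps $\lceil (n-2)/2\rceil=\frac{m-1}2+\lceil (r-2)/2\rceil$.
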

\begin{proof}
Since $P\le n-2$, we have $\max\{P,n-2\}=n-2$, so $\varphi(2)=(n-2)+\binom22+\left\lceil\frac{n-2}{2}\right\rceil=(n-1)+\left\lceil\frac{n-2}{2}\right\rceil$. Since $n=m-1+r$ and $m-1$ is even, we can rewrite the ceiling term as
\[
\left\lceil\frac{n-2}{2}\right\rceil = \left\lceil\frac{(m-1)+(r-2)}{2}\right\rceil = \frac{m-1}{2} + \left\lceil\frac{r-2}{2}\right\rceil.
\]
Substituting this back into $\varphi(2)$ gives
\[
\varphi(2) = (n-1) + \frac{m-1}{2} + \left\lceil\frac{r-2}{2}\right\rceil = \frac{m-1}{2} + n + \left( \left\lceil\frac{r-2}{2}\right\rceil - 1 \right).
\]
If $r$ is even, $\left\lceil\frac{r-2}{2}\right\rceil - 1 = \frac{r-2}{2} - 1 = \frac{r-4}{2}$. If $r$ is odd, $\left\lceil\frac{r-2}{2}\right\rceil - 1 = \frac{r-1}{2} - 1 = \frac{r-3}{2}$.

Finally, for integers $r \ge 1$, the correction term equals $-1$ if $r \in \{1,2\}$ (namely, for $r=1$ odd and $r=2$ even) and is non-negative for all $r \ge 3$.
\end{proof}

\section{Proof of Theorem~\ref{thm:main} for even $m$}\label{sec:even}

Throughout this section, $m$ is even and $2\le m\le n$. We prove that $\gamma(F_2(K_{m,n}))=\frac m2+n-1$.

\subsection{Upper bound}

Let $A=\{a_1,\dots,a_m\}$, $B=\{b_1,\dots,b_n\}$. Define the internal matching
\[
\mathcal M(A)=\{\{a_1,a_2\},\{a_3,a_4\},\dots,\{a_{m-1},a_m\}\},\qquad |\mathcal M(A)|=\frac m2,
\]
which covers all of $A$, together with
\[
\mathcal S=\{\{a_i,b_i\}: 1\le i\le m-1\}\cup\{\{a_m,b_i\}: m\le i\le n-1\},\qquad |\mathcal S|=n-1.
\]
Let $D=\mathcal M(A)\cup\mathcal S$, so $|D|=\frac m2+n-1$.

\begin{proposition}[Upper bound for even $m$]\label{prop:even-upper}
For $m$ even, $2\le m\le n$, the set $D$ is a dominating set of $F_2(K_{m,n})$.
\end{proposition}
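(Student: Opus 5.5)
We verify the three conditions of Lemma~\ref{lem:char} for $D=\mathcal M(A)\cup\mathcal S$. Here $D_\mathcal A=\mathcal M(A)$, $D_\mathcal B=\emptyset$ and $D_\mathcal X=\mathcal S$. Since $\mathcal M(A)$ is a perfect matching of $A$ (as $m$ is even), we have $C_\mathcal A=A$, so $\overline{C}_\mathcal A=\emptyset$. Hence condition~(iii) holds vacuously, whatever $\overline{C}_\mathcal B$ is.

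Next we compute the isolated vertices of $H=(A\cup B,\mathcal S)$. Each $a_i$ with $1\le i\le m-1$ is incident to $\{a_i,b_i\}$. The vertex $a_m$ is incident to $\{a_m,b_i\}$ for $m\le i\le n-1$; this range is empty when $n=m$. So $A_0\subseteq\{a_m\}$, and in either case $k_A\le 1$. Thus $\binom{A_0}2=\emptyset\subseteq D_\mathcal A$, and condition~(i) holds.

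On the $B$ side, $b_1,\dots,b_{m-1}$ are covered by the first family and $b_m,\dots,b_{n-1}$ by the second, so $B_0=\{b_n\}$. Then $\binom{B_0}2=\emptyset=D_\mathcal B$, and condition~(ii) holds. By Lemma~\ref{lem:char}, $D$ is dominating.

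For the size, $|\mathcal S|=(m-1)+(n-m)=n-1$, and the two families are disjoint. Together with $|\mathcal M(A)|=m/2$ this gives $|D|=\frac m2+n-1$.

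The only point needing care is the edge case $n=m$, where $a_m$ becomes isolated in $H$. This is harmless because a single isolated vertex imposes no pair in condition~(i).
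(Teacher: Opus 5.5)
Your proposal is correct and follows essentially the same route as the paper: you check the three conditions of Lemma~\ref{lem:char}, using $A_0\subseteq\{a_m\}$, $|B_0|\le 1$, and $\overline{C}_\mathcal A=\emptyset$ because $\mathcal M(A)$ is a perfect matching of $A$. Your exact identification $B_0=\{b_n\}$, your treatment of the edge case $n=m$, and your size count are all accurate.
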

\begin{proof}
We check the three conditions of Lemma~\ref{lem:char}. Every $a_i$ with $1\leq i\le m-1$ lies in $\{a_i,b_i\}\in\mathcal S$, so $A_0\subseteq\{a_m\}$ and (i) holds trivially. Every $b_i$ with $1\leq i\le n-1$ lies in an element of $\mathcal S$, so $B_0\subseteq\{b_n\}$ and (ii) holds trivially. Finally, $\mathcal M(A)$ covers all of $A$, so $\overline{C}_\mathcal A=\emptyset$ and (iii) holds vacuously.
\end{proof}

\subsection{Lower bound}

\begin{proposition}[Lower bound for even $m$]\label{prop:even-lower}
For $m$ even, $2\le m\le n$, $\gamma(F_2(K_{m,n}))\ge\frac m2+n-1$.
\end{proposition}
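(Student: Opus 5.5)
We fix a dominating set $D$ of $F_2(K_{m,n})$, with $x,y,z,s,t,k_A,k_B$ as in Section~\ref{sec:lemmas}, and show $x+y+z\ge \frac m2+n-1$. The proof splits into three cases according to whether $\overline{C}_\mathcal A$ and $\overline{C}_\mathcal B$ are empty.

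\emph{Case $s=0$.} Here $D_\mathcal A$ covers all of $A$, so $y\ge m/2$. Lemma~\ref{lem:basic} gives $x+z\ge n-1$, and adding the two bounds finishes this case.

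\emph{Case $s\ge1$, $t\ge1$.} If $y=0$, Lemma~\ref{lem:zeroz} gives $|D|\ge n+m-2\ge \frac m2+n-1$, since $m\ge2$. So assume $y\ge1$. Lemma~\ref{lem:forcedA} gives $B_0\subseteq C_\mathcal B$, so $B_0$ and $\overline{C}_\mathcal B$ are disjoint. By \eqref{eq:forced}, every $b\in\overline{C}_\mathcal B$ has $d_H(b)\ge s$, and every $b\in C_\mathcal B\setminus B_0$ has $d_H(b)\ge1$. Summing degrees over $B$ exactly as in the proof of Lemma~\ref{lem:zeroz}, with $m$ replaced by $s$, gives
\[
x\ge st+(n-t-k_B).
\]
Lemmas~\ref{lem:char}(ii) and~\ref{lem:forcedA} give $z\ge\theta(k_B)$, and $\theta(k)-k\ge-1$ for all $k\ge0$. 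Hence
\[
x+z\ge n-1+(s-1)t\ge n+s-2.
\]
By \eqref{eq:st}, $y\ge\lceil (m-s)/2\rceil$. If $s\ge2$, then $|D|\ge \frac{m-s}2+n+s-2=\frac m2+n-1+\frac{s-2}2$, which suffices. If $s=1$, then $m-1$ is odd because $m$ is even, so $y\ge\frac m2$ and $|D|\ge \frac m2+n-1$. The parity of $m$ enters only in this subcase.

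\emph{Case $s\ge1$, $t=0$.} This is the case I expect to be the delicate one. By Remark~\ref{rem:hbound}, $z\ge\psi(k_B)$, and always $x\ge n-k_B$. So $x+z\ge\varphi(k_B)$ with $P=0$, and Lemma~\ref{lem:gmono} gives
\[
x+z\ge\varphi(2)=n-1+\left\lceil\tfrac{n-2}2\right\rceil\ge n-1+\tfrac{m-2}2.
\]
If $y\ge1$, adding $y$ gives the claim. If $y=0$, Lemma~\ref{lem:char}(i) forces $k_A\le1$, hence $x\ge m-1$. Three subcases remain.
\begin{itemize}
\item If $n\ge m+2$, then $\lceil (n-2)/2\rceil\ge m/2$, and already $\varphi(2)\ge\frac m2+n-1$.
\item If $n=m+1$, then $\lceil (n-2)/2\rceil=\lceil (m-1)/2\rceil=\frac m2$, again since $m$ is even, so the same conclusion holds.
\item If $n=m$, we use $x\ge\max\{m-1,n-k_B\}$, i.e.\ we apply the lemma with $P=m-1$. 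Since $P=n-1$ here, Lemma~\ref{lem:gmono} does not apply directly, so we evaluate by hand. For $k_B\ge1$ we have $\max\{P,n-k_B\}=n-1$, so $x+z\ge n-1+\psi(k_B)\ge n-1+\frac m2$. Indeed, $\psi(1)=\lceil n/2\rceil=\frac m2$, and for $k\ge2$, $\binom k2+\lceil (n-k)/2\rceil\ge \lceil n/2\rceil$. For $k_B=0$ we have $\varphi(0)=n+\frac m2$, which is larger still.
\end{itemize}
This completes all cases.
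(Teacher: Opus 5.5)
Your proof is correct, but it is organized differently from the paper's.

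The paper argues by contradiction. Assuming $|D|\le\frac m2+n-2$, Lemma~\ref{lem:basic} forces $|D_\mathcal A|\le\frac m2-1$ and hence $|\overline{C}_\mathcal A|\ge 2$. So the configurations with $s\in\{0,1\}$ that you treat separately never arise. It then splits on whether some $b\in\overline{C}_\mathcal B$ has $d_H(b)=1$:
\begin{itemize}
\item if such a $b$ exists, it forces $\overline{C}_\mathcal A\subseteq\{a^*\}$, a contradiction;
\item otherwise $D_\mathcal B$ must cover $B_0\cup B_1$, while every other vertex of $B$ has $H$-degree at least $2$. This gives the single inequality $|D|\ge\frac32n+\frac{k_B^2-4k_B}2$, and only the boundary case $k_B=2$, $n=m$ needs Lemma~\ref{lem:symmetric}.
\end{itemize}
So the paper's even case is self-contained and never uses Lemma~\ref{lem:gmono}.

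Your argument is direct. It splits on whether $\overline{C}_\mathcal A$ and $\overline{C}_\mathcal B$ are empty, and reuses the odd-case machinery: the weighted degree count from the first case of Lemma~\ref{lem:casec}, and Remark~\ref{rem:hbound} together with Lemma~\ref{lem:gmono} when $t=0$. It is longer, but it isolates exactly where the evenness of $m$ is used:
\begin{itemize}
\item the subcase $s=1$, where $\lceil (m-1)/2\rceil=\frac m2$;
\item the subcase $t=0$, $y=0$, $n=m+1$, where $\lceil (n-2)/2\rceil=\frac m2$.
\end{itemize}
For odd $m$, these are precisely the two places where your bound drops by one unit below $\lceil m/2\rceil+n-1$; every other case still reaches that value. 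They correspond to the two exceptions of Theorem~\ref{thm:main}. The configuration in Remark~\ref{rem:m3} has $s=1$, and the construction in Proposition~\ref{prop:exception-upper} has $y=t=0$ with $n=m+1$. So your route gives a more unified view of both parities, at the cost of more cases and reliance on the auxiliary lemma.
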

\begin{proof}
Suppose, for contradiction, that some dominating set $D$ satisfies $|D|\le\frac m2+n-2$. By Lemma~\ref{lem:basic}, $x+z\ge n-1$, so
\[
y\le\Big(\frac m2+n-2\Big)-(n-1)=\frac m2-1.
\]
Since each pair of $D_\mathcal A$ covers at most $2$ vertices, $D_\mathcal A$ covers at most $m-2$ vertices of $A$, so at least two vertices of $A$ are uncovered by $D_\mathcal A$, that is, $|\overline{C}_\mathcal A|\geq 2$.

Recall that $\binom{B_0}2\subseteq D_\mathcal B$, by Lemma~\ref{lem:char}(ii). We have two cases depending on the intersection of $B_1$ with the uncovered set $\overline{C}_\mathcal B$. 

\paragraph{Case 1: $B_1 \cap \overline{C}_\mathcal B \neq \emptyset$.} Let $b^*\in B_1\cap\overline{C}_\mathcal B$, and let $\{a^*,b^*\}$ be the unique element of $D_\mathcal X$ containing $b^*$. Since $b^*\in\overline{C}_\mathcal B$, Lemma~\ref{lem:char}(iii) gives $\{a,b^*\}\in D_\mathcal X$ for every $a\in\overline{C}_\mathcal A$; hence $\overline{C}_\mathcal A\subseteq\{a^*\}$, contradicting $|\overline{C}_\mathcal A|\ge2$.

\paragraph{Case 2: $B_1 \cap \overline{C}_\mathcal B = \emptyset$.}
Since $\overline{C}_\mathcal A\neq\emptyset$, Lemma~\ref{lem:forcedA} gives $B_0\subseteq C_\mathcal B$. Hence $D_\mathcal B$ covers both $B_0$ and $B_1$, so
\[
z\ge\binom{k_B}2+\left\lceil\frac{|B_1|}2\right\rceil\ge\frac{k_B(k_B-1)}2+\frac{|B_1|}2.
\]
On the other hand, the remaining vertices of $B$ satisfy $d_H(b)\ge2$, so
\[
x\ge0\cdot k_B+1\cdot|B_1|+2(n-k_B-|B_1|)=2n-2k_B-|B_1|.
\]
Summing, and using $|B_1|\le n-k_B$:
\[
|D|\ge x+z\ge\frac32n+\frac{k_B^2-4k_B}2.
\]
We check the possible values of $k_B$:
\begin{itemize}
\item $k_B=0$: $|D|\ge\frac32n\ge n+\frac m2>\frac m2+n-2$, a contradiction.
\item $k_B=1$ or $k_B\ge3$: $|D|\ge\frac32n-\frac32$; combined with $|D|\le\frac m2+n-2$ this yields $n\le m-1$, impossible since $n\ge m$.
\item $k_B=2$: $|D|\ge\frac32n-2$. If $n>m$, this already exceeds $\frac m2+n-2$. If $n=m$, the hypothesis $|D|\le\frac32m-2$ together with $|D|\ge\frac32m-2$
gives $|D|=\frac32m-2$, so every inequality used to obtain this bound is an
equality: $y=0$, $|B_1|=n-k_B=m-2$, and every vertex of $B\setminus B_0$ lies
in $B_1$, whence $x=|B_1|=m-2$. But then Lemma~\ref{lem:symmetric} (with $y=0$) gives $x\ge m-1$, a contradiction.
\end{itemize}
Every case yields a contradiction, so $|D|\ge\frac m2+n-1$.
\end{proof}

Propositions~\ref{prop:even-upper} and~\ref{prop:even-lower} prove Theorem~\ref{thm:main} for even $m$.

\section{Proof of Theorem~\ref{thm:main} for $m=3$}\label{sec:small}

In this section we prove the case $m=3$ of Theorem~\ref{thm:main}.

\begin{proposition}\label{prop:m3}
For every $n\ge3$, $\gamma(F_2(K_{3,n}))=n$.
\end{proposition}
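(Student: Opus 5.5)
For the upper bound I would generalize Example~\ref{ex:H}. Take
\[
D=\big\{\{a_1,a_2\},\,\{b_1,b_2\}\big\}\cup\big\{\{a_3,b_i\}: 3\le i\le n\big\},
\]
which has $2+(n-2)=n$ elements. Then $A_0=\{a_1,a_2\}$ and $\{a_1,a_2\}\in D_\mathcal A$, so condition (i) of Lemma~\ref{lem:char} holds. Likewise $B_0=\{b_1,b_2\}$ and $\{b_1,b_2\}\in D_\mathcal B$, so (ii) holds. Finally $\overline{C}_\mathcal A=\{a_3\}$ and $\overline{C}_\mathcal B=\{b_3,\dots,b_n\}$, and every pair $\{a_3,b_i\}$ with $i\ge 3$ lies in $D_\mathcal X$, so (iii) holds. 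Hence $\gamma(F_2(K_{3,n}))\le n$.

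For the lower bound, suppose $|D|\le n-1$. Lemma~\ref{lem:basic} gives $x+z\ge n-1$, so $y=0$ and $x+z=n-1$. The equality case of Lemma~\ref{lem:basic} then gives $k_B\in\{1,2\}$, $x=n-k_B$ and $z=\binom{k_B}2$. Since $y=0$, we have $\overline{C}_\mathcal A=A\neq\emptyset$.

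I would split according to whether $\overline{C}_\mathcal B$ is empty.
\begin{itemize}
\item If $\overline{C}_\mathcal B\neq\emptyset$, Lemma~\ref{lem:zeroz} applies and gives $|D|\ge n+3-2=n+1$, a contradiction.
\item If $\overline{C}_\mathcal B=\emptyset$, then $D_\mathcal B$ covers all of $B$, so $z\ge\lceil n/2\rceil\ge 2$ because $n\ge3$. But $z=\binom{k_B}2\le 1$, a contradiction.
\end{itemize}
(Alternatively, Lemma~\ref{lem:forcedA} gives $B_0\subseteq C_\mathcal B$, which is compatible with $k_B=2$, $z=1$, so the covering bound is the decisive one in the second case.)

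Lemma~\ref{lem:symmetric} is also consistent with this: it gives $x\ge m-1=2$, so no extra constraint is needed. The only real work is making sure the two cases exhaust the possibilities, and they do, since $\overline{C}_\mathcal B$ is either empty or not. I therefore expect no serious obstacle. The main point to check is that Lemma~\ref{lem:zeroz}'s hypotheses, $D_\mathcal A=\emptyset$ and $\overline{C}_\mathcal B\ne\emptyset$, are exactly the ones available in the first case. This yields $\gamma(F_2(K_{3,n}))=n$.
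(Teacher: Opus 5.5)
Your proof is correct and follows the paper's argument. The upper-bound construction is the same, and so is the reduction via Lemma~\ref{lem:basic} to $y=0$, $x+z=n-1$, $z=\binom{k_B}2\le1$. The only difference is the last step. The paper avoids a case split by writing $n-1-z=x\ge3|\overline{C}_\mathcal B|\ge3(n-2z)$, i.e.\ $5z\ge2n+1$, which forces $z\ge2$. You instead separate $\overline{C}_\mathcal B=\emptyset$ (handled by the covering bound) from $\overline{C}_\mathcal B\ne\emptyset$ (handled by Lemma~\ref{lem:zeroz}, which gives $|D|\ge n+1$ outright). That is exactly the pattern the paper itself uses for $y=0$ in Section~\ref{sec:odd}.
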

\begin{proof}
Let $A=\{a_1,a_2,a_3\}$. The set $D=\{\{a_1,a_2\}\}\cup\{\{b_1,b_2\}\}\cup\{\{a_3,b_j\}:3\le j\le n\}$ has size $n$, and it is dominating by Lemma~\ref{lem:char}: $A_0=\{a_1,a_2\}$ and $B_0=\{b_1,b_2\}$, whose pairs belong to $D$, and $\overline{C}_\mathcal A=\{a_3\}$, $\overline{C}_\mathcal B=\{b_3,\dots,b_n\}$, so (iii) holds by construction.

For the lower bound, suppose that $D$ is a dominating set with $|D|\le n-1$. Since $x+z\ge n-1$ by Lemma~\ref{lem:basic}, we get $y=0$ and $x+z=n-1$; hence, again by Lemma~\ref{lem:basic}, $k_B\in\{1,2\}$ and $z=\binom{k_B}2\le1$. Recall that $|\overline{C}_\mathcal B|\ge n-2z$ by~\eqref{eq:st}. Since $D_\mathcal A=\emptyset$, we have $\overline{C}_\mathcal A=A$ and hence for every $b\in \overline{C}_\mathcal B$ we have $\{\{a_1,b\},\{a_2,b\},\{a_3,b\}\} \subseteq D_\mathcal X$ (Lemma~\ref{lem:char}(iii)), giving $x\ge 3|\overline{C}_\mathcal B|\ge3(n-2z)$; combined with $x=n-1-z$ this yields $5z\ge 2n+1$. Since $n\ge3$, we get $z\ge\frac{2n+1}{5}>1$, and since $z$ is an integer, $z\ge2$, contradicting $z\le1$.
\end{proof}

Proposition~\ref{prop:m3} proves Theorem~\ref{thm:main} for $m=3$.

\section{Proof of Theorem~\ref{thm:main} for odd $m\ge5$}\label{sec:odd}

Throughout this section, $m$ is odd, $m\ge5$, $n\ge m$. Write $M_0:=\lceil \frac{m}{2}\rceil+n-1=\frac{m-1}2+n$.

Figure~\ref{fig:constructions} shows, for $m=5$, the two constructions that give the upper bounds of this section.

\begin{figure}[ht]
\centering
\begin{minipage}{0.48\textwidth}\centering
\begin{tikzpicture}[scale=0.95,
  every node/.style={font=\small},
  filled/.style={circle,draw,fill=black,inner sep=1.5pt},
  hollow/.style={circle,draw,fill=white,inner sep=1.5pt},
  hedge/.style={very thick},
  pair/.style={dashed,red,thick}]
  \node[filled,label=above:$a_{1}$] (a1) at (0.00,2.2) {};
  \node[filled,label=above:$a_{2}$] (a2) at (1.00,2.2) {};
  \node[filled,label=above:$a_{3}$] (a3) at (2.00,2.2) {};
  \node[filled,label=above:$a_{4}$] (a4) at (3.00,2.2) {};
  \node[hollow,label=above:$a_{5}$] (a5) at (4.00,2.2) {};
  \node[filled,label=below:$b_{1}$] (b1) at (0.00,0) {};
  \node[filled,label=below:$b_{2}$] (b2) at (1.00,0) {};
  \node[filled,label=below:$b_{3}$] (b3) at (2.00,0) {};
  \node[filled,label=below:$b_{4}$] (b4) at (3.00,0) {};
  \node[hollow,label=below:$b_{5}$] (b5) at (4.00,0) {};
  \node[hollow,label=below:$b_{6}$] (b6) at (5.00,0) {};
  \draw[hedge] (a1) -- (b1);
  \draw[hedge] (a2) -- (b2);
  \draw[hedge] (a3) -- (b3);
  \draw[hedge] (a4) -- (b4);
  \draw[pair] (b1) to[bend left=35] (b2);
  \draw[pair] (b3) to[bend left=35] (b4);
  \draw[pair,line width=1.6pt] (b5) to[bend left=35] (b6);
\end{tikzpicture}\\[1mm]
(a) $m=5$, $n=6$: $|D|=\frac{3m-1}2=7$.
\end{minipage}\hfill
\begin{minipage}{0.48\textwidth}\centering
\begin{tikzpicture}[scale=0.95,
  every node/.style={font=\small},
  filled/.style={circle,draw,fill=black,inner sep=1.5pt},
  hollow/.style={circle,draw,fill=white,inner sep=1.5pt},
  hedge/.style={very thick},
  pair/.style={dashed,red,thick}]
  \node[filled,label=above:$a_{1}$] (a1) at (0.00,2.2) {};
  \node[filled,label=above:$a_{2}$] (a2) at (1.00,2.2) {};
  \node[filled,label=above:$a_{3}$] (a3) at (2.00,2.2) {};
  \node[filled,label=above:$a_{4}$] (a4) at (3.00,2.2) {};
  \node[filled,label=above:$a_{5}$] (a5) at (4.00,2.2) {};
  \node[filled,label=below:$b_{1}$] (b1) at (0.00,0) {};
  \node[filled,label=below:$b_{2}$] (b2) at (1.00,0) {};
  \node[filled,label=below:$b_{3}$] (b3) at (2.00,0) {};
  \node[filled,label=below:$b_{4}$] (b4) at (3.00,0) {};
  \node[filled,label=below:$b_{5}$] (b5) at (4.00,0) {};
  \node[filled,label=below:$b_{6}$] (b6) at (5.00,0) {};
  \node[hollow,label=below:$b_{7}$] (b7) at (6.00,0) {};
  \draw[hedge] (a1) -- (b1);
  \draw[hedge] (a2) -- (b2);
  \draw[hedge] (a3) -- (b3);
  \draw[hedge] (a4) -- (b4);
  \draw[hedge] (a5) -- (b5);
  \draw[hedge] (a5) -- (b6);
  \draw[pair] (a1) to[bend right=35] (a2);
  \draw[pair] (a3) to[bend right=35] (a4);
  \draw[pair] (a1) to[bend right=22] (a5);
\end{tikzpicture}\\[1mm]
(b) $m=5$, $n=7$: $|D|=\lceil m/2\rceil+n-1=9$.
\end{minipage}

\caption{The upper-bound constructions of Section~\ref{sec:odd} for $m=5$, with the conventions of Figure~\ref{fig:H}. (a)~The construction of Section~\ref{sec:odd-exception} for $n=m+1$; here $A_0=\{a_5\}$, $B_0=\{b_5,b_6\}$ and $\overline{C}_\mathcal B=\emptyset$. (b)~The construction $\mathcal M(A)\cup\mathcal S$ of Section~\ref{sec:odd-general}; here $A_0=\emptyset$, $B_0=\{b_7\}$ and $\overline{C}_\mathcal A=\emptyset$.}
\label{fig:constructions}
\end{figure}
First, we prove the following lemma, which deals with the case $1\le|D_\mathcal A|\le\frac{m-3}2$.

\begin{lemma}\label{lem:casec}
Let $D$ be a dominating set of $F_2(K_{m,n})$ such that $1\le|D_\mathcal A|\le\frac{m-3}2$. Then $|D|\ge M_0$, for every $n\ge m$.
\end{lemma}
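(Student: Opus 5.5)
We argue by contradiction, following the scheme of Proposition~\ref{prop:even-lower}. Suppose $D$ is dominating with $1\le y\le\frac{m-3}2$ and $|D|\le M_0-1$. The point of the upper bound on $y$ is that $D_\mathcal A$ covers at most $2y\le m-3$ vertices of $A$, so $s=|\overline{C}_\mathcal A|\ge 3$. In particular $\overline{C}_\mathcal A\ne\emptyset$, and Lemma~\ref{lem:forcedA} gives $B_0\subseteq C_\mathcal B$. As in the even case, we split according to whether $B_1\cap\overline{C}_\mathcal B$ is empty.

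\emph{Case $B_1\cap\overline{C}_\mathcal B\neq\emptyset$.} Take $b^*$ in this set. By Lemma~\ref{lem:char}(iii), $b^*$ is adjacent in $H$ to every vertex of $\overline{C}_\mathcal A$. Since $d_H(b^*)=1$, this forces $s\le 1$, contradicting $s\ge3$.

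\emph{Case $B_1\cap\overline{C}_\mathcal B=\emptyset$.} Here both $B_0$ and $B_1$ lie in $C_\mathcal B$, and we keep the ceilings throughout, since they are needed in the tight case. Write $k=k_B$ and $\beta=|B_1|\le n-k$. The $\binom{k}2$ pairs of $\binom{B_0}2$ cover no vertex of $B_1$, so
\[
z\ge\binom{k}{2}+\Big\lceil\frac{\beta}{2}\Big\rceil .
\]
Every vertex outside $B_0\cup B_1$ has $H$-degree at least $2$, so
\[
x\ge \beta+2(n-k-\beta).
\]
Adding these gives
\[
x+z\ge \binom k2+2(n-k)-\Big\lfloor\frac{\beta}{2}\Big\rfloor .
\]
The right-hand side is non-increasing in $\beta$, so it is at least its value at $\beta=n-k$, namely
\[
f(k):=\binom k2+(n-k)+\Big\lceil\frac{n-k}{2}\Big\rceil .
\]
Combined with $y\ge1$, this yields $|D|\ge f(k)+1$.

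It remains to check that $f(k)+1\ge M_0=\frac{m-1}2+n$ for every $k\ge0$, which is the desired contradiction. For $k=2$ we get
\[
f(2)+1=n+\Big\lceil\frac{n-2}2\Big\rceil\ge n+\Big\lceil\frac{m-2}2\Big\rceil=n+\frac{m-1}2 ,
\]
using $n\ge m$ and $m$ odd. For $k=1$ we get $f(1)+1=n+\lceil\frac{n-1}2\rceil$, which is at least the same bound. For $k=0$ the value is larger still. For $k\ge3$, the increment $f(k+1)-f(k)\ge k-2\ge0$ (the same computation as in Lemma~\ref{lem:gmono}) shows that $f$ is non-decreasing there, so $f(k)\ge f(2)$.

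The main obstacle is the near-tight configuration $n=m$, $k_B=2$, $y=1$, $B_1=B\setminus B_0$. There the crude bound $\frac32n-2$ used in the even case is too weak by a fractional amount, so it is essential to retain $\lceil\beta/2\rceil$ and exploit the oddness of $m$. If this bookkeeping fails somewhere, the fallback is to use $d_H(b)\ge s\ge3$ for $b\in\overline{C}_\mathcal B$, which adds at least one to the bound on $x$ whenever $\overline{C}_\mathcal B\ne\emptyset$. The remaining subcase $\overline{C}_\mathcal B=\emptyset$ can then be handled with Remark~\ref{rem:hbound} and Corollary~\ref{cor:g2}, by taking $P$ to be the lower bound on $x$ available in that subcase.
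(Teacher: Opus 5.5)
Your proof is correct, but it is organized differently from the paper's.

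The paper splits on $t=|\overline{C}_\mathcal B|$. For $t\ge1$ it uses $d_H(b)\ge s\ge3$ on $\overline{C}_\mathcal B$ to get $x\ge n-k_B+(s-1)t$, hence $|D|\ge n+m-y-2\ge M_0$. For $t=0$ it bounds $x\ge\max\{m-y-1,\,n-k_B\}$ (Lemma~\ref{lem:symmetric}) and $z\ge\psi(k_B)$ (Remark~\ref{rem:hbound}) separately, then minimizes via Lemma~\ref{lem:gmono} and Corollary~\ref{cor:g2}. Your ``fallback'' is essentially that proof.

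You instead transplant the $B_1\cap\overline{C}_\mathcal B$ dichotomy of Proposition~\ref{prop:even-lower}. The first case fails because $s\ge2$. In the second you bound $x+z$ jointly, trading $H$-degree against covering through $\beta=|B_1|$, which handles $t\ge1$ and $t=0$ at once. All steps check out. One small point: the increment bound $f(k+1)-f(k)\ge k-2$ holds from $k=2$ on, and that is what you need for $f(3)\ge f(2)$, not just from $k=3$.

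For $k\ge2$ your $f(k)=(n-k)+\psi(k)$ is the paper's $\varphi(k)$ without the $P$-term. Since $P\le n-2$ never binds at the minimizer $k=2$, and the final unit comes from $y\ge1$ (as in the paper's $t=0$ case), you lose nothing. You bypass Lemmas~\ref{lem:symmetric} and~\ref{lem:gmono} and Corollary~\ref{cor:g2} entirely. Also, the ceilings are not truly essential: the crude bound $|D|\ge\frac32n-1$ already forces $|D|\ge M_0$ at $n=m$ by integrality, since $m$ is odd.

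Your route is shorter and makes this lemma a direct analogue of the even case. The paper's route buys reuse: the same $\varphi$-machinery with $P=m-1$ settles the subcase $y=0$, $t=0$ of Proposition~\ref{prop:general-lower}. There, a joint estimate like yours only gives $x+z\ge f(2)=M_0-1$ at $n=m$, so the extra input $x\ge m-1$ is genuinely needed.
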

\begin{proof}
By~\eqref{eq:st}, $s\ge m-2y\ge3$. Set $P:=m-y-1$, so $x\ge P$ by Lemma~\ref{lem:symmetric}.

\paragraph{Case $t\ge1$.} Since $s\ge3$, we have $\overline{C}_\mathcal A\ne\emptyset$, so $B_0\subseteq C_\mathcal B$ by Lemma~\ref{lem:forcedA}; in particular $\overline{C}_\mathcal B\cap B_0=\emptyset$. By Lemma~\ref{lem:char}(iii), each of the $t$ vertices of $\overline{C}_\mathcal B$ has $d_H(b)\ge s$, while every other $b\notin B_0$ has $d_H(b)\ge1$; summing over $B$,
\begin{equation}\label{eqA}
x\ge st+1\cdot((n-t)-k_B)+0\cdot k_B=n-k_B+(s-1)t.
\end{equation}
Using only $z\ge\binom{k_B}2\ge0$ and $\binom{k_B}2-k_B\ge-1$ for all $k_B\ge0$,
\[
x+y+z\ge y+n-k_B+(s-1)t+\binom{k_B}2\ge y+n+(s-1)t-1.
\]
Since $t\ge1$ and $s\ge3$, $(s-1)t\ge s-1\ge m-2y-1$, so
\[
x+y+z\ge y+n+(m-2y-1)-1=n+m-y-2.
\]

By hypothesis, $y\le\frac{m-3}2$ and hence $m-y\ge \frac{m+3}2$. Therefore
 \[
 x+y+z\ge n+m-y-2 \geq n+\frac{m-1}2=M_0.
 \]
\paragraph{Case $t=0$.} As in the proof of Lemma~\ref{lem:basic}, $x=\sum_{b\in B}d_H(b)\ge n-k_B$; since also $x \ge P$, we have $x \ge \max\{P,n-k_B\}$, and $z\ge \psi(k_B)$ by Remark~\ref{rem:hbound}. Since 
\[
0 \le P = m - y - 1 < m \le n \quad \text{and} \quad n - 2 \ge m - 2 \ge m - y - 1 = P \quad (\text{using } y \ge 1),
\]
Lemma~\ref{lem:gmono} applies to $x+z\geq \varphi(k_B)$, where $ \varphi(k_B):=\max\{P,n-k_B\}+\psi(k_B)$, and gives $x+y+z\ge y+\min_{k_B}\{\varphi(k_B)\}=y+\varphi(2)$. By Corollary~\ref{cor:g2}, $\varphi(2)\ge M_0-1$ always, so $x+y+z\ge y+M_0-1\ge M_0$ since $y\ge1$.

Both subcases give $|D|=x+y+z\ge M_0$, and the proof is completed.
\end{proof}

\subsection{The case $n=m+1$}\label{sec:odd-exception}

In this subsection we prove that, for $m\ge5$ odd and $n=m+1$,
\[
\gamma(F_2(K_{m,n}))=\left\lceil\frac m2\right\rceil+n-2=\frac{3m-1}2.
\]
This value is \emph{one less} than $M_0$. The formula $M_0$, although a valid upper bound for all $n\ge m$ (Subsection~\ref{sec:odd-general}), is not optimal at this point.

\subsubsection{Upper bound}

Let $A=\{a_1,\dots,a_m\}$, $B=\{b_1,\dots,b_{m+1}\}$. Define 
\[
D_\mathcal A=\emptyset,\qquad D_\mathcal X=\{\{a_i,b_i\}:1\le i\le m-1\},
\]
\[
D_\mathcal B=\{\{b_1,b_2\},\{b_3,b_4\},\dots,\{b_{m-2},b_{m-1}\}, \{b_m,b_{m+1}\}\},
\]
 and $D=D_\mathcal A\cup D_\mathcal B\cup D_\mathcal X$. Then $|D|=(m-1)+\frac{m+1}2=\frac{3m-1}2$.

\begin{proposition}\label{prop:exception-upper}
The set $D$ is a dominating set of $F_2(K_{m,m+1})$.
\end{proposition}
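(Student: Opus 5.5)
The plan is to verify directly the three conditions of Lemma~\ref{lem:char}. First we compute the graph $H$ and its isolated vertices. The edges of $H$ are the $m-1$ edges $a_ib_i$ with $1\le i\le m-1$. Hence every $a_i$ with $i\le m-1$ has degree $1$ in $H$, and $A_0=\{a_m\}$, so $k_A=1$. In $B$, every $b_i$ with $i\le m-1$ has degree $1$, while $b_m$ and $b_{m+1}$ are isolated, so $B_0=\{b_m,b_{m+1}\}$ and $k_B=2$.

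Condition (i) is immediate, since $\binom{A_0}{2}=\emptyset\subseteq D_\mathcal A$ even though $D_\mathcal A$ is empty. For condition (ii), the only pair is $\binom{B_0}{2}=\{\{b_m,b_{m+1}\}\}$, and this pair was put into $D_\mathcal B$ on purpose. This is exactly why the matching on $B$ pairs $b_m$ with $b_{m+1}$.

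For condition (iii), we check that $D_\mathcal B$ covers all of $B$. Because $m$ is odd, $m-1$ is even, so the pairs $\{b_1,b_2\},\dots,\{b_{m-2},b_{m-1}\}$ form a perfect matching of $\{b_1,\dots,b_{m-1}\}$. Together with $\{b_m,b_{m+1}\}$ they cover all of $B$. Thus $\overline{C}_\mathcal B=\emptyset$, and (iii) holds vacuously, although $\overline{C}_\mathcal A=A$.

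Finally, Lemma~\ref{lem:char} gives that $D$ is dominating. As a sanity check on the size, $|D|=(m-1)+\frac{m+1}{2}=\frac{3m-1}{2}$. The only point needing care is the parity bookkeeping: the oddness of $m$ is what makes $n=m+1$ even and lets $D_\mathcal B$ be a perfect matching of $B$ whose last pair is exactly $\binom{B_0}{2}$. There is no real obstacle beyond this.
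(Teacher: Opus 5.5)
Your proof is correct and follows the paper's own argument: you verify the three conditions of Lemma~\ref{lem:char} using $A_0=\{a_m\}$, $B_0=\{b_m,b_{m+1}\}$ with $\{b_m,b_{m+1}\}\in D_\mathcal B$, and the fact that $D_\mathcal B$ is a perfect matching of $B$, so that $\overline{C}_\mathcal B=\emptyset$. Your explicit parity remark (that $m-1$ is even) spells out why $D_\mathcal B$ is a perfect matching, a step the paper asserts without comment.
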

\begin{proof}
We check the three conditions of Lemma~\ref{lem:char}. Every $a_i$ with $1 \leq i\le m-1$ lies in $\{a_i,b_i\}\in D_\mathcal X$, so $A_0=\{a_m\}$ and (i) holds trivially. Similarly, $B_0=\{b_m,b_{m+1}\}$, and $\{b_m,b_{m+1}\}\in D_\mathcal B$, so (ii) holds. Finally, $D_\mathcal B$ is a perfect matching of $B$, so $\overline{C}_\mathcal B=\emptyset$ and (iii) holds vacuously.
\end{proof}
Hence $\gamma(F_2(K_{m,m+1}))\le\frac{3m-1}2$. This construction is illustrated in Figure~\ref{fig:constructions}(a). Note that the two isolated vertices $b_m,b_{m+1}$ of $H$ in $B$ are handled by a single pair, which is at the same time part of the perfect matching of $B$; this is the source of the saving of one element with respect to $M_0$.

\subsubsection{Lower bound}

\begin{proposition}\label{prop:lowerexception}
For $m \ge5$ odd and $n=m+1$, $\gamma(F_2(K_{m,n}))\ge\frac{3m-1}2$.
\end{proposition}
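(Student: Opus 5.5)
The plan is to split according to the value of $y=|D_\mathcal A|$. Fix a dominating set $D$ of $F_2(K_{m,m+1})$ and write $x,y,z,s,t$ as in Section~\ref{sec:lemmas}. Since $n=m+1$, the target is
\[
\tfrac{3m-1}2=M_0-1, \qquad\text{where } M_0=\tfrac{m-1}2+m+1=\tfrac{3m+1}2 .
\]
In each case I want to show $|D|\ge\frac{3m-1}2$ using the lemmas already available.

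\emph{Large $y$.} Suppose $y\ge\frac{m-1}2$. Lemma~\ref{lem:basic} gives $x+z\ge n-1=m$. Hence $|D|\ge m+\frac{m-1}2=\frac{3m-1}2$ directly.

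\emph{Intermediate $y$.} Suppose $1\le y\le\frac{m-3}2$. This range is nonempty because $m\ge5$. Lemma~\ref{lem:casec} applies with $n=m+1\ge m$ and yields $|D|\ge M_0>\frac{3m-1}2$.

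\emph{The case $y=0$.} Here $\overline{C}_\mathcal A=A$, so $s=m$, and I split on $t$.
\begin{itemize}
\item If $t\ge1$, then Lemma~\ref{lem:zeroz} gives $|D|\ge n+m-2=2m-1$. Since $m\ge1$, this is at least $\frac{3m-1}2$.
\item If $t=0$, I follow the ``Case $t=0$'' argument of Lemma~\ref{lem:casec}, with a different value of $P$.
\end{itemize}
For the subcase $t=0$, Lemma~\ref{lem:symmetric} with $y=0$ gives $x\ge m-1=:P$. As in the proof of Lemma~\ref{lem:basic}, $x\ge n-k_B$, so $x\ge\max\{P,n-k_B\}$. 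Remark~\ref{rem:hbound} gives $z\ge\psi(k_B)$. Thus $x+z\ge\varphi(k_B)$, with $\varphi$ as in Lemma~\ref{lem:gmono} for this $n$ and $P$.

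The hypotheses of Lemma~\ref{lem:gmono} hold: $0\le P=m-1<n$, and $P=m-1=n-2\le n-2$. So the minimum of $\varphi$ is $\varphi(2)$. Corollary~\ref{cor:g2} with $r=n-m+1=2$ then gives
\[
x+z\ge\varphi(2)=\tfrac{m-1}2+n-1=\tfrac{3m-1}2 .
\]
Since $|D|\ge x+z$, this finishes the case $y=0$, and with it all cases.

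The only delicate point is this last subcase ($y=0$, $t=0$). It is exactly where the construction of Proposition~\ref{prop:exception-upper} lives, so the bound must be tight there. Lemma~\ref{lem:casec} cannot be quoted because it needs $y\ge1$. The step to double-check is that $P=m-1$ still satisfies $P\le n-2$; this holds precisely because $n=m+1$. With that verified, the lemmas combine as above with no slack lost.
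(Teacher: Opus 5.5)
Your proof is correct and follows the paper's case split exactly. The three cases are $y\ge\frac{m-1}2$ via Lemma~\ref{lem:basic}, $1\le y\le\frac{m-3}2$ via Lemma~\ref{lem:casec}, and $y=0$ split on $t$, with Lemma~\ref{lem:zeroz} handling $t\ge1$. The only difference is in the subcase $y=0$, $t=0$: there the paper skips the $\varphi$ machinery and simply combines $x\ge m-1$ (from $k_A\le1$) with $z\ge\lceil (m+1)/2\rceil=\frac{m+1}2$ (since all of $B$ must be covered), which already gives $\frac{3m-1}2$. Your route through Lemma~\ref{lem:gmono} and Corollary~\ref{cor:g2} with $P=m-1$, $r=2$ is valid but not needed.
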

\begin{proof}
Suppose $D$ is dominating with $|D|\le\frac{3m-1}2-1=:M_1-1$, where $M_1:=\frac{3m-1}2$. Since $n-1=m$, Lemma~\ref{lem:basic} gives $x+z\ge m$.

\paragraph{Case (a): $y\ge\frac{m-1}2$.} Then $|D|\ge y+m\ge\frac{m-1}2+m=M_1$, a contradiction.

\paragraph{Case (b): $1\le y\le\frac{m-3}2$.} This is precisely the hypothesis of Lemma~\ref{lem:casec} (with $n=m+1$), which gives $|D|\ge M_0=M_1+1$, a contradiction.

\paragraph{Case (c): $y=0$.} If $t\ge1$, then Lemma~\ref{lem:zeroz} gives $|D|\ge n+m-2=2m-1\ge\frac{3m-1}2=M_1$, a contradiction. Hence $t=0$, that is, $D_\mathcal B$ covers all of $B$. Then Lemma~\ref{lem:char}(i) (with $D_\mathcal A=\emptyset$) implies $k_A\le1$, giving $x\ge m-1$; and covering all $n=m+1$ vertices of $B$ requires $z\ge\lceil(m+1)/2\rceil=\frac{m+1}2$. Hence $|D|\ge(m-1)+\frac{m+1}2=\frac{3m-1}2=M_1$, a contradiction.

In all cases, we obtain a contradiction. Therefore, $|D|\ge M_1=\frac{3m-1}2$.
\end{proof}

Propositions~\ref{prop:exception-upper} and~\ref{prop:lowerexception} prove Theorem~\ref{thm:main} for $m\ge5$ odd and $n=m+1$.

\subsection{The case $n\ne m+1$}\label{sec:odd-general}

In this subsection we prove that, for $m\ge5$ odd, $n\ge m$ and $n\ne m+1$,
\[
\gamma(F_2(K_{m,n}))=\left\lceil\frac m2\right\rceil+n-1=M_0.
\]

\subsubsection{Upper bound}

The same construction $\mathcal M(A)\cup\mathcal S$ from Section~\ref{sec:even} applies, with $\mathcal S$ left unchanged and the adaptation to $m$ odd occurring solely in $\mathcal M(A)$ via
\[
\mathcal M(A)=\{\{a_1,a_2\},\dots,\{a_{m-2},a_{m-1}\}\}\cup\{\{a_1,a_m\}\},\qquad|\mathcal M(A)|=\left\lceil\frac m2\right\rceil,
\]
gives a dominating set of size $\lceil m/2\rceil+n-1$; the verification is identical to Section~\ref{sec:even}, since $\mathcal M(A)$ still covers all of $A$ (see Figure~\ref{fig:constructions}(b)). This establishes $\gamma(F_2(K_{m,n}))\le M_0$ for every $n\ge m$, $m$ odd.

\subsubsection{Lower bound}

\begin{proposition}\label{prop:general-lower}
Let $m\ge5$ be odd, $n\ge m$, and $n\ne m+1$. Then $\gamma(F_2(K_{m,n}))\ge M_0$.
\end{proposition}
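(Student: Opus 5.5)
We argue by contradiction: suppose $D$ is a dominating set with $|D|\le M_0-1=\frac{m-1}2+n-1$. We split according to $y=|D_\mathcal A|$, mirroring the proof of Proposition~\ref{prop:lowerexception}. The main obstacle is that the easy bound from Lemma~\ref{lem:basic} falls one short of $M_0$ in two places, namely $y=\frac{m-1}2$ and $y=0$ with $t=0$, and each needs an extra structural argument.

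\emph{Case $1\le y\le\frac{m-3}2$.} Lemma~\ref{lem:casec} gives $|D|\ge M_0$ directly.

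\emph{Case $y\ge\frac{m-1}2$.} Lemma~\ref{lem:basic} gives $|D|\ge y+n-1\ge M_0-1$. Strict inequality is immediate unless $y=\frac{m-1}2$ and $x+z=n-1$, so assume both. By the equality part of Lemma~\ref{lem:basic}, $k_B\in\{1,2\}$, $x=n-k_B$ and $z=\binom{k_B}2$.
\begin{itemize}[label=--]
\item Since $y$ pairs cover at most $m-1$ vertices of $A$, we have $s\ge1$.
\item Lemma~\ref{lem:forcedA} then gives $z\ge\lceil k_B/2\rceil$, which rules out $k_B=1$.
\item So $k_B=2$ and $z=1$. The unique pair of $D_\mathcal B$ is $B_0$, so $\overline{C}_\mathcal B=B\setminus B_0$.
\item Since $x=n-2$, every $b\in B\setminus B_0$ has $d_H(b)=1$.
\item By Lemma~\ref{lem:char}(iii), each such $b$ is adjacent to every vertex of $\overline{C}_\mathcal A$, so $s=1$, say $\overline{C}_\mathcal A=\{a^*\}$, and all edges of $H$ are incident with $a^*$.
\item Hence $A_0=A\setminus\{a^*\}$, and Lemma~\ref{lem:char}(i) forces $\binom{m-1}2\le y=\frac{m-1}2$, i.e.\ $m\le3$. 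This contradicts $m\ge5$.
\end{itemize}

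\emph{Case $y=0$.} If $t\ge1$, Lemma~\ref{lem:zeroz} gives $|D|\ge n+m-2\ge M_0$, because $m-2\ge\frac{m-1}2$. So assume $t=0$. Since $D_\mathcal A=\emptyset$, condition (i) forces $k_A\le1$, hence $x\ge m-1=:P$. Also $x\ge n-k_B$, and $z\ge\psi(k_B)$ by Remark~\ref{rem:hbound}, so $|D|\ge\varphi(k_B)$ in the notation of Lemma~\ref{lem:gmono}.
\begin{itemize}[label=--]
\item If $n\ge m+2$ (the only other possibility, since $n\ne m+1$), then $P\le n-2$. Lemma~\ref{lem:gmono} gives $\min_k\varphi(k)=\varphi(2)$, and Corollary~\ref{cor:g2} with $r=n-m+1\ge3$ gives $\varphi(2)\ge M_0$.
\item If $n=m$, then $P=n-1$ and Lemma~\ref{lem:gmono} does not apply verbatim, so we evaluate $\varphi$ by hand. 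We get $\varphi(0)=n+\lceil n/2\rceil$ and $\varphi(1)=n-1+\lceil n/2\rceil$. For $k\ge2$ the first term $\max\{n-1,n-k\}$ is constant, so the difference computation of Lemma~\ref{lem:gmono} shows $\varphi$ is non-decreasing there, with $\varphi(2)=n+\lceil(n-2)/2\rceil=n-1+\lceil n/2\rceil$.
\item Thus $\min_k\varphi(k)=n-1+\frac{m+1}2=M_0$.
\end{itemize}

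All cases contradict $|D|\le M_0-1$, which proves $\gamma(F_2(K_{m,n}))\ge M_0$.
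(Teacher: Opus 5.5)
Your proof is correct and follows essentially the same route as the paper. You use the same case split: $y=0$, handled by Lemma~\ref{lem:zeroz} when $t\ge1$ and by Lemma~\ref{lem:gmono} and Corollary~\ref{cor:g2} with $P=m-1$ (plus the by-hand check at $n=m$) when $t=0$; then $1\le y\le\frac{m-3}2$ via Lemma~\ref{lem:casec}; and finally $y\ge\frac{m-1}2$. In that last case you simply merge the paper's cases (a) and (c2) and run the identical structural argument forcing $\overline{C}_\mathcal A=\{a^*\}$ and $\binom{m-1}2\le\frac{m-1}2$. Your step giving $s=1$ and $A_0=A\setminus\{a^*\}$ tacitly uses $B\setminus B_0\neq\emptyset$, i.e.\ $n>2$, which is immediate here.
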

\begin{proof}
Suppose $D$ is dominating with $|D|\le M_0-1$.

\paragraph{Case (a): $y\ge\frac{m+1}2$.} By Lemma~\ref{lem:basic}, $|D|\ge y+(n-1)\ge\frac{m+1}2+n-1=M_0$, a contradiction.

\paragraph{Case (b): $y=0$.} Let $r:=n-m+1\ge1$ ($r\ne2$, as $n\ne m+1$). We distinguish according to the value of $t$.

\emph{If $t\ge1$:} Lemma~\ref{lem:zeroz} gives $|D|\ge n+m-2\ge n+\frac{m-1}2=M_0$, since $m\ge3$.

\emph{If $t=0$:} then $x\ge\max\{m-1,n-k_B\}$ (by Lemma~\ref{lem:symmetric} with $y=0$, and as in the proof of Lemma~\ref{lem:basic}) and $z\ge \psi(k_B)$ by Remark~\ref{rem:hbound}; we apply Lemma~\ref{lem:gmono} with $P=m-1$. First, if $n\ge m+1$ (i.e. $r\ge2$), then $n-2\ge m-1=P$, so Lemma~\ref{lem:gmono} gives $\min_{k_B}\{ \varphi(k_B)\}=\varphi(2)$, and Corollary~\ref{cor:g2} gives $\varphi(2)\ge M_0-1$, with equality only at $r=2$, which is not the case because $n\ne m+1$. So $\varphi(2)\ge M_0$ whenever $r\ge3$. Now, if $n=m$ (i.e. $r=1$), then $n-2=m-2<P$, so $k_B=2$ lies in the $\max=P$ branch: $\varphi(2)=P+\psi(2)=(m-1)+1+\lceil(m-2)/2\rceil=\frac{3m-1}2=M_0$ (using $m$ odd), which equals $\varphi(1)=\varphi(0)-1=m+\lceil m/2\rceil-1=\frac{3m-1}2$ as well. Since $\varphi$ is non-decreasing for $k\ge2$ (Lemma~\ref{lem:gmono}), we get $\min_{k\ge0}\varphi(k)=\min\{\varphi(1),\varphi(2)\}=M_0$. In either case, $|D|=x+z\ge\varphi(k_B)\ge M_0$.

Hence, in both subcases, $|D|\ge M_0$, a contradiction.

\paragraph{Case (c1): $1\le y\le\frac{m-3}2$.} This is precisely the hypothesis of Lemma~\ref{lem:casec}, which gives $|D|\ge M_0$ directly, for every $n\ge m$ (in particular for $n\ne m+1$).

\paragraph{Case (c2): $y=\frac{m-1}2$.} Since $|D|\le M_0-1=y+n-1$ and $x+z\ge n-1$ by Lemma~\ref{lem:basic}, we get $x+z=n-1$. Hence, again by Lemma~\ref{lem:basic}, $k_B\in\{1,2\}$, $x=n-k_B$ and $z=\binom{k_B}2$. Since $s\ge m-2y=1$, we have $\overline{C}_\mathcal A\neq\emptyset$, and Lemma~\ref{lem:forcedA} gives $k_B\le 2z$; this rules out $k_B=1$ (for which $z=0$). Thus $k_B=2$, $z=1$, and by Lemma~\ref{lem:char}(ii), $D_\mathcal B=\{B_0\}$, so that $\overline{C}_\mathcal B=B\setminus B_0$. By Lemma~\ref{lem:char}(iii), every $b\in B\setminus B_0$ satisfies $d_H(b)\ge s$, hence
\[
n-2=x=\sum_{b\in B\setminus B_0}d_H(b)\ge (n-2)s.
\]
Since $n-2>0$, this implies $s=1$ and $d_H(b)=1$ for every $b\in B\setminus B_0$. Writing $\overline{C}_\mathcal A=\{a^*\}$, Lemma~\ref{lem:char}(iii) gives $\{a^*,b\}\in D_\mathcal X$ for every $b\in B\setminus B_0$, so $D_\mathcal X=\{\{a^*,b\}: b\in B\setminus B_0\}$. Consequently $A_0=A\setminus\{a^*\}$, and Lemma~\ref{lem:char}(i) requires $\binom{m-1}2\le y=\frac{m-1}2$, that is, $m\le3$, a contradiction. This configuration is illustrated in Figure~\ref{fig:c2}.

\begin{figure}[ht]
\centering
\begin{tikzpicture}[
  every node/.style={font=\small},
  filled/.style={circle,draw,fill=black,inner sep=1.5pt},
  hollow/.style={circle,draw,fill=white,inner sep=1.5pt},
  hedge/.style={very thick},
  pair/.style={dashed,red,thick}]
  \node[hollow,label=above:$a_{1}$] (a1) at (0.00,2.2) {};
  \node[hollow,label=above:$a_{2}$] (a2) at (1.00,2.2) {};
  \node[hollow,label=above:$a_{3}$] (a3) at (2.00,2.2) {};
  \node[hollow,label=above:$a_{4}$] (a4) at (3.00,2.2) {};
  \node[filled,label=above:$a_{5}$] (a5) at (4.00,2.2) {};
  \node[hollow,label=below:$b_{1}$] (b1) at (0.00,0) {};
  \node[hollow,label=below:$b_{2}$] (b2) at (1.00,0) {};
  \node[filled,label=below:$b_{3}$] (b3) at (2.00,0) {};
  \node[filled,label=below:$b_{4}$] (b4) at (3.00,0) {};
  \node[filled,label=below:$b_{5}$] (b5) at (4.00,0) {};
  \node[filled,label=below:$b_{6}$] (b6) at (5.00,0) {};
  \node[filled,label=below:$b_{7}$] (b7) at (6.00,0) {};
  \draw[hedge] (a5) -- (b3);
  \draw[hedge] (a5) -- (b4);
  \draw[hedge] (a5) -- (b5);
  \draw[hedge] (a5) -- (b6);
  \draw[hedge] (a5) -- (b7);
  \draw[pair] (a1) to[bend right=35] (a2);
  \draw[pair] (a3) to[bend right=35] (a4);
  \draw[pair] (b1) to[bend left=35] (b2);
  \draw[blue,dotted,very thick] (a2) to[out=-70,in=-110,looseness=2.4] node[below,font=\footnotesize,text=blue] {$\{a_2,a_3\}\notin D_\mathcal A$} (a3);
  \node[font=\footnotesize,anchor=west] at (4.15,2.6) {$=a^*$};
\end{tikzpicture}
\caption{The structure in Case~(c2) of the proof of Proposition~\ref{prop:general-lower}, for $m=5$ and $n=7$: $D_\mathcal B=\{B_0\}=\{\{b_1,b_2\}\}$, the unique uncovered vertex $a^*=a_5$ is joined in $H$ to every vertex of $B\setminus B_0$, and $|D|=x+y+z=5+2+1=8=M_0-1$. Then $A_0=A\setminus\{a^*\}$ has $\binom{m-1}2=6$ pairs, but only $y=2$ of them lie in $D_\mathcal A$; for instance, the vertex $\{a_2,a_3\}$ of $F_2(K_{5,7})$ (dotted curve) is not dominated, so condition~(i) of Lemma~\ref{lem:char} fails.}
\label{fig:c2}
\end{figure}

As all the cases (a), (b), (c1), (c2) lead to a contradiction, we have $|D|\ge M_0$.
\end{proof}

The upper bound of Subsection~\ref{sec:odd-general} and Proposition~\ref{prop:general-lower} prove Theorem~\ref{thm:main} for $m\ge5$ odd and $n\ne m+1$. This completes the proof of Theorem~\ref{thm:main}.

\begin{remark}\label{rem:m3}
The argument of Case~(c2) also explains why $m=3$ behaves differently from larger odd values of $m$. In that case, a dominating set $D$ with $|D_\mathcal A|=\frac{m-1}2$ and $|D|=M_0-1$ is forced to have the following structure: $\overline{C}_\mathcal A=\{a^*\}$, $D_\mathcal B=\{B_0\}$ with $|B_0|=2$, and $D_\mathcal X=\{\{a^*,b\}:b\in B\setminus B_0\}$. Such a set is dominating if and only if all the $\binom{m-1}2$ pairs of $A\setminus\{a^*\}$ belong to $D_\mathcal A$, that is, if and only if $\binom{m-1}2\le\frac{m-1}2$, which holds precisely when $m\le3$. For $m=3$ we have $\binom{2}{2}=1=\frac{m-1}2$, and this configuration is exactly the dominating set of size $n=M_0-1$ given in Proposition~\ref{prop:m3}. Hence the value $\gamma(F_2(K_{3,n}))=n$ is one unit below the general formula $\lceil m/2\rceil+n-1$ for every $n\ge3$, whereas for $m\ge5$ this saving is impossible and the only exception is the one at $n=m+1$.
\end{remark}

\section{Computational verification}\label{sec:computation}

As an independent check of Theorem~\ref{thm:main}, we computed $\gamma(F_2(K_{m,n}))$, with the help of Claude (Anthropic), for all $2\le m\le n\le10$\rev{, by solving the standard integer linear program for the domination number: for each vertex $v$ of $F_2(K_{m,n})$ we introduce a binary variable $x_v$, and we minimize $\sum_v x_v$ subject to $\sum_{u\in N[v]}x_u\ge1$ for every vertex $v$, where $N[v]$ denotes the closed neighborhood of $v$. We solved these programs with the mixed-integer solver of SciPy (HiGHS)}. In all $45$ cases, the optimal value coincides with the value given by Theorem~\ref{thm:main}; the results are shown in Table~\ref{tab:values}. 

\begin{table}[ht]
\centering
\begin{tabular}{c|ccccccccc}
\toprule
$m\backslash n$ & $2$ & $3$ & $4$ & $5$ & $6$ & $7$ & $8$ & $9$ & $10$\\
\midrule
$2$ & 2 & 3 & 4 & 5 & 6 & 7 & 8 & 9 & 10 \\
$3$ &  & 3 & 4 & 5 & 6 & 7 & 8 & 9 & 10 \\
$4$ &  &  & 5 & 6 & 7 & 8 & 9 & 10 & 11 \\
$5$ &  &  &  & 7 & \textbf{7} & 9 & 10 & 11 & 12 \\
$6$ &  &  &  &  & 8 & 9 & 10 & 11 & 12 \\
$7$ &  &  &  &  &  & 10 & \textbf{10} & 12 & 13 \\
$8$ &  &  &  &  &  &  & 11 & 12 & 13 \\
$9$ &  &  &  &  &  &  &  & 13 & \textbf{13} \\
$10$ &  &  &  &  &  &  &  &  & 14 \\
\bottomrule
\end{tabular}
\caption{Values of $\gamma(F_2(K_{m,n}))$ for $2\le m\le n\le 10$, obtained by integer linear programming. The entries in bold correspond to the exceptional case $n=m+1$ with $m\ge5$ odd.}
\label{tab:values}
\end{table}

\section{Concluding remarks and further work}\label{sec:conclusion}

We determined $\gamma(F_2(K_{m,n}))$ exactly for all $2\le m\le n$, uncovering a genuine exceptional point at $n=m+1$ when $m\ge5$ is odd. It is natural to ask whether an analogous exceptional behavior occurs for $\gamma(F_2(G))$ when $G$ is a complete multipartite graph with three or more parts; this case is studied in the work in progress~\cite{CR-multi}. Another natural problem is to determine $\gamma(F_k(K_{m,n}))$ for $k\ge3$; in that case the vertices of $F_k(K_{m,n})$ split into $k+1$ classes according to the number of tokens lying in $A$, and the problem acquires a different flavor. This case is considered in the work in progress~\cite{R-Fk}.

%

\section*{Funding}
No funding was received for this work.

\section*{Disclosure statement}
The authors report that there are no competing interests to declare.

\section*{Data availability statement}
The code used for the computational verification in Section~\ref{sec:computation} is available from the corresponding author upon request.

\section*{Declaration of generative AI use}
\rev{During the preparation of this work, the authors used Claude Opus 5.5 (Anthropic) to obtain the auxiliary Lemma~\ref{lem:gmono}, which was used to simplify some of the proofs, and to assist with \LaTeX{} formatting, language editing, suggestions on the organization and presentation of some proofs, and the implementation of the computational verification in Section~\ref{sec:computation}. The authors verified the statement and proof of Lemma~\ref{lem:gmono} and of all other results, and take full responsibility for the content of this article.}

\bigskip
\noindent E. Acosta Troncoso\\
\url{emmanuel_acosta22@yahoo.com}\\
Universidad Aut\'onoma de Zacatecas\\
Unidad Acad\'emica de Matem\'aticas\\
Calzada Solidaridad y Paseo La Bufa, Col.\ Hidr\'aulica\\
C.P.\ 98060, Zacatecas, Zac., Mexico

\medskip
\noindent I. Caldera Caldera\\
\url{isaaccaldera56@gmail.com}\\
Universidad Aut\'onoma de Zacatecas\\
Unidad Acad\'emica de Matem\'aticas\\
Calzada Solidaridad y Paseo La Bufa, Col.\ Hidr\'aulica\\
C.P.\ 98060, Zacatecas, Zac., Mexico

\medskip
\noindent L. M. Rivera (corresponding author)\\
\url{luismanuel.rivera@gmail.com}\\
Universidad Aut\'onoma de Zacatecas\\
Unidad Acad\'emica de Matem\'aticas\\
Calzada Solidaridad y Paseo La Bufa, Col.\ Hidr\'aulica\\
C.P.\ 98060, Zacatecas, Zac., Mexico

\end{document}